\newtheorem{theorem}{Theorem}[section]
\newtheorem*{theorem*}{Theorem}
\newtheorem{corollary}[theorem]{Corollary}
\newtheorem{lemma}[theorem]{Lemma}
\newtheorem{rem}[theorem]{Remark}
\newtheorem{proposition}[theorem]{Proposition}
\theoremstyle{definition}
\newcommand{\ee}{\varepsilon}
\newcommand{\nn}{\mathbb{N}}
\begin{document}
\title[Asymptotically uniformly flat norms]
{Power type asymptotically uniformly smooth \\ and asymptotically uniformly flat norms}
\author{R.M. Causey}

\begin{abstract} We provide a short characterization of $p$-asymptotic uniform smoothability and asymptotic uniform flatenability of operators and of Banach spaces. We use these characterizations to show that many asymptotic uniform smoothness properties pass to injective tensor products of operators and of Banach spaces. In particular, we prove that the injective tensor product of two asymptotically uniformly smooth Banach spaces is asymptotically uniformly smooth.  We prove that for $1<p<\infty$, the class of $p$-asymptotically uniformly smoothable operators can be endowed with an ideal norm making this class a Banach ideal. We also prove that the class of asymptotically uniformly flattenable operators can be endowed with an ideal norm making this class a Banach ideal.

\end{abstract}

\maketitle

\tableofcontents

\addtocontents{toc}{\setcounter{tocdepth}{1}}

\section{Introduction}

Beginning with Kloeckner's elegant proof \cite{Kl} of Bourgain's theorem \cite{Bo} that binary trees of arbitrary, finite height cannot be uniformly biLipschitzly embedded into uniformly convex Banach spaces, a number of self-improvement arguments have appeared in the literature to prove results in the non-linear  theory of Banach spaces and of operators (see, for example, \cite{B}, \cite{BZ}, \cite{CD1}, \cite{CD}, \cite{JS}).  These self-improvement arguments each involve an isometric property of a Banach space or of an operator. That is, each of these arguments depends upon some property which may be gained or lost by equivalently renorming the space, or the domain or range of the operator. Consequently, renorming theorems have been of significant use in the recent advances of the non-linear theory. The notion of a $p$-asymptotically uniformly smooth Banach space has recently seen significant use in the non-linear theory of Banach spaces. Indeed, the existence of asymptotically uniformly smooth norms,  $p$-asymptotically smooth norms, or asymptotically uniformly flat norms on Banach spaces have been used in \cite{BZ}, \cite{CD}, \cite{DKLR},  \cite{GKL1}, \cite{GKL2}, \cite{KR},  and \cite{LR}.  For some of these results, such as \cite{KR} and \cite{LR}, the precise results depend upon knowing for which $p$ an asymptotically uniformly smoothable Banach space admits an $p$-asymptotically  smooth equivalent norm.   In \cite{CD}, the notion of an asymptotically uniformly smooth operator was used to extend non-linear results from \cite{BKL} previously known only in the spatial case.  Therefore it is of interest to offer isomorphic characterizations of when a Banach space admits an equivalent norm which is asymptotically $p$-smooth. The first part of this work is devoted to defining isomorphic properties which characterize those $p$ such that a given operator or Banach space admits an equivalent $p$-asymptotically smooth norm. To that end, we have the following result. All necessary terminology will be given in the second section.

\begin{theorem} Let $A:X\to Y$ be an operator. \begin{enumerate}[(i)]\item For any $1<p<\infty$, $A$ is $p$-asymptotically uniformly smoothable if and only if it satisfies $\ell_p$ upper tree estimates.  \item $A$ is asymptotically uniformly flattenable if and only if it satisfies $c_0$ upper tree estimates. \end{enumerate}

\label{main1}
\end{theorem}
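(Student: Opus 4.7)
The plan is to prove each equivalence in both directions, handling (i) in detail and treating (ii) by the same scheme with $c_0$ in place of $\ell_p$. The necessity direction in (i) (that $p$-asymptotic uniform smoothability implies $\ell_p$ upper tree estimates) should come out of unpacking definitions. Assume some equivalent norm on $X$ makes the modulus $\bar{\rho}_A(t) \lesssim t^p$. Given a weakly null tree $(x_t)$ of height $n$ in $B_X$ and any branch $(t_1,\ldots,t_n)$ with scalars $(a_i)$, I apply the smoothness estimate level by level, passing at each step to a sufficiently deep subtree so that lower-order error terms are absorbed. The resulting bound on $\|A(\sum_i a_i x_{t_i})\|_Y$ is of the form $K(\sum_i |a_i|^p)^{1/p}$, with $K$ depending on the $p$-AUS constant and on the equivalence constants between the renormed and original norms. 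Tree estimates are stable under equivalent renormings, so this is all one needs.

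For the sufficiency direction I would construct an equivalent norm on $X$ from the UTE hypothesis via a Minkowski-functional construction, in the spirit of Godefroy--Kalton--Lancien-style renormings adapted to the operator setting. Starting from the UTE constant $K$, the goal is a symmetric, convex, weakly closed body $B \subseteq X$ sandwiched between fixed multiples of $B_X$, whose local shape at each point encodes the desired $p$-smoothness constraint on weakly null perturbations. Equivalently, one can work dually and define the new norm on $X^*$ as a supremum over $w^*$-null tree branches, using the pre-adjoint of $A$. The UTE hypothesis delivers the inclusion $cB_X \subseteq B$ that gives equivalence of the Minkowski functional with $\|\cdot\|_X$, while the very definition of $B$ ensures that each weakly null displacement $z$ with $\|z\|_X \leq t$ of a unit vector lies in $(1+Ct^p)B$ after passing through $A$.

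The main technical obstacle is verifying the $p$-AUS modulus of the constructed norm quantitatively. The natural argument is by contradiction and tree extraction: if the new norm fails to be $p$-AUS at some base point $x_0$, I choose a sequence of weakly null witnesses to this failure, then at each level apply the same reasoning to $x_0 + z$ for a witness $z$, producing nested levels of bad sequences. A diagonal/pigeonhole argument across these levels extracts a weakly null tree in $B_X$ whose branches violate the original $\ell_p$ UTE bound, yielding a contradiction. The delicate points are ensuring the extracted tree is genuinely weakly null with the right coordinate structure, and matching constants so that the quantitative excess on each level compounds into a branch-level violation of the UTE constant $K$.

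Part (ii) should follow by running the same scheme with $c_0$-style estimates $\max_i |a_i|$ replacing the $\ell_p$-sums $(\sum_i |a_i|^p)^{1/p}$ throughout, and with the modulus condition being asymptotic flatness (the modulus vanishing on a nontrivial initial interval of $t$) in place of $p$-power control. The tree-extraction step is combinatorially identical; the only substantive change is that the hypothesized failure in the converse direction produces an unbounded pointwise excess rather than a power-type excess, and the constant-matching in the contradiction step is correspondingly simpler.
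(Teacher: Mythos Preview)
Your sketch contains a structural error that would prevent the sufficiency direction from going through: you propose to renorm $X$, but for a general operator $A:X\to Y$ the modulus $\rho(\sigma,A)$ is computed in the $Y$-norm, and an equivalent renorming of $X$ only rescales which nets lie in $\sigma B_X$. As the paper remarks explicitly before the theorem, if $A$ is $p$-AUS (or AUF) with respect to some norm on $Y$, it remains so after any equivalent renorming of $X$; conversely, renorming $X$ can never produce $p$-AUS-ness that was not already present. Your phrases ``construct an equivalent norm on $X$'', ``a symmetric, convex, weakly closed body $B\subseteq X$'', and ``define the new norm on $X^*$ \ldots\ using the pre-adjoint of $A$'' all target the wrong space (and the last does not make sense unless $X$ happens to be a dual). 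The construction must take place in $Y$.

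The paper's actual argument does renorm $Y$, and not via a Minkowski functional of a body defined geometrically in the GKL style, but rather in the spirit of Pisier's renorming: for each $y\in Y$ one defines a quantity $g(y)$ (respectively $[y]$ in the $\ell_p$ case) as the infimum of constants $C_1$ for which a certain tree-indexed set built from $y$ and an arbitrary normally weakly null collection in $B_X$ is \emph{big}. One then checks that $g$ is convex (here the closure of bigness under finite intersection is essential), Lipschitz, and comparable to $\|\cdot\|_Y$, so its sublevel set is the ball of an equivalent norm on $Y$. The verification that this norm has the required modulus is indeed by contradiction and tree extraction, much as you describe: a putative bad weakly null net $(x_u)$ is grafted as a first level onto witnessing bad trees $(x^u_t)$, and the resulting single tree contradicts the defining property of $g(y)$. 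So your intuition for the verification step is sound; what is missing is that the function being renormed lives on $Y$ and is defined through the ``big set'' machinery rather than as a Minkowski functional of a pre-specified body.
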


Applying Theorem \ref{main1} to the identity operator of a Banach space yields new renorming theorems for non-separable Banach spaces. In the case that $A=I_X$ and $X$ is separable, Theorem \ref{main1} follows from \cite{FOSZ}. However, that proof uses the separability in a fundamental way and cannot be modified to work for the non-separable case. Furthermore, \cite{FOSZ} deduces the content of Theorem \ref{main1} for separable spaces from a separate, quite involved result. Our proof  is short and direct, and is in the spirit of Pisier's famous renorming theorem \cite{P}. Furthermore, our proof holds for operators and does not use separability in any way. 

The reader should compare Theorem \ref{main1} to the beautiful renorming theorem from \cite{GKL1}, and the generalization in \cite{C} to operators and non-separable spaces. 

\begin{theorem}\cite{GKL1} If $X$ is a separable, infinite dimensional Banach space, then the Szlenk power type $\textbf{\emph{p}}(X)$ lies in $[1, \infty)$, and if $1/q+1/\textbf{\emph{p}}(X)=1$, $q$ is the supremum of those $p$ such that $X$ admits an equivalent, $p$-asymptotically uniformly smooth norm. 

\label{gkl}
\end{theorem}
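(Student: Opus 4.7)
The plan is to split the statement into two parts: (i) $\mathbf{p}(X) \in [1, \infty)$, and (ii) $q = \sup\{p \in (1,\infty) : X \text{ admits an equivalent } p\text{-AUS norm}\}$, where $1/q + 1/\mathbf{p}(X) = 1$. Part (ii) decomposes into two inequalities via the standard duality between $p$-AUS-ness of a norm on $X$ and power-type decay of the weak-$*$ Szlenk derivations on $B_{X^*}$, and it is where the real work lies.

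For the easy inequality $\sup\{p : X \text{ is } p\text{-AUSable}\} \le q$, I would assume $X$ carries an equivalent $p$-AUS norm, so $\bar{\rho}_X(t) \le Ct^p$, and show that $Sz(X, \epsilon) \le C' \epsilon^{-p^*}$ with $1/p + 1/p^* = 1$. The argument is the classical weak-$*$ Szlenk derivation computation: a $p$-AUS estimate on $X$ dualizes to a quantitative lower bound on how much a weak-$*$ slice of $B_{X^*}$ must retract under each Szlenk derivation, and iterating this shows that the derivation exhausts $B_{X^*}$ in at most $C'\epsilon^{-p^*}$ steps. This yields $\mathbf{p}(X) \le p^*$, and taking conjugates reverses the inequality to give $p \le \mathbf{p}(X)^* = q$.

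The hard direction is the renorming, and it is the main obstacle. Given any $p < q$, equivalently $p^* > \mathbf{p}(X)$, the definition of $\mathbf{p}(X)$ provides a polynomial bound $Sz(X, \epsilon) \le C\epsilon^{-p^*}$. The plan is to build an equivalent dual norm $|||\cdot|||$ on $X^*$ whose unit ball absorbs each iterated weak-$*$ Szlenk derivative $s^{(n)}_{\epsilon_n}(B_{X^*})$ at a rate governed by this polynomial bound; concretely, the new dual unit ball can be taken as the closed absolutely convex weak-$*$ compact hull of a weighted sum of such iterated derivations, with weights decreasing in a way matched to the exponent $p^*$. The two checks are that this does define an equivalent dual norm (uniform absorption, which uses the Szlenk power-type estimate) and that its predual norm on $X$ satisfies $\bar{\rho}_X(t) \lesssim t^p$, which reduces to reading off the power from the weights via a Minkowski-functional calculation on slices of the new dual ball. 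Combining the two inequalities yields $q = \mathbf{p}(X)^*$. Finally, for part (i), the lower bound $\mathbf{p}(X) \ge 1$ follows from infinite-dimensionality: a basic sequence in $X$ produces arbitrarily long weak-$*$ null sequences in $\epsilon B_{X^*}$, forcing $Sz(X, \epsilon) \gtrsim \epsilon^{-1}$, while $\mathbf{p}(X) < \infty$ is the Lancien-type self-improvement theorem asserting that any separable space with $Sz(X) \le \omega$ already has power-type Szlenk index (outside this range $\mathbf{p}(X) = \infty$ and the second half is vacuous, so the hypothesis $Sz(X) \le \omega$ is tacit in the statement).
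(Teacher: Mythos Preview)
The paper does not prove this theorem. It is stated in the introduction with an explicit citation to \cite{GKL1} (Godefroy--Kalton--Lancien) and serves as background against which the paper's own Theorem~\ref{main1} is positioned: Theorem~\ref{gkl} identifies the supremum of the $p$ for which an equivalent $p$-AUS norm exists, while Theorem~\ref{main1} gives an isomorphic criterion for when that supremum is attained. So there is no ``paper's own proof'' to compare against.

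That said, your sketch is a reasonable outline of the original GKL argument. A couple of remarks. First, you correctly flag that the hypothesis $Sz(X)\leqslant\omega$ is tacit in the statement as written; without it $\mathbf{p}(X)=\infty$ by the paper's convention and the conclusion is vacuous. Second, your description of the hard renorming direction (building a dual norm as the Minkowski functional of a weighted convex hull of iterated Szlenk derivations) is the right idea, but you have left the two ``checks'' as assertions; in the actual GKL proof the verification that the predual norm has the claimed $p$-AUS modulus is where the submultiplicativity of the Szlenk index (the self-improvement ingredient you allude to for finiteness of $\mathbf{p}(X)$) enters, and this step is not just a Minkowski-functional read-off. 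Third, your argument for $\mathbf{p}(X)\geqslant 1$ via basic sequences is a bit loose: the usual route is through a James-type $\ell_1$-distortion or finite-dimensional decomposition argument showing $Sz(X,\varepsilon)\geqslant c/\varepsilon$.
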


We invite the comparison because Theorem \ref{gkl} provides an isomorphic characterization of the supremum of those $p$ for which $X$ admits an equivalent $p$-asymptotically uniformly smooth norm, but it does not answer the question of whether this supremum is attained. Indeed, for any $1<p\leqslant \infty$ and $q$ such that $1/p+1/q=1$, one can exhibit two spaces $X_p$, $Y_p$ such that $\textbf{p}(X_p)=\textbf{p}(Y_p)=q$ and such that $X_p$ is $p$-asymptotically uniformly smooth (resp. asymptotically uniformly flat), while $Y_p$ fails to be $p$-asymptotically uniformly smoothable (resp. asymptotically uniformly flattenable). Indeed, one can take $X_p=\ell_p$ (resp. $c_0$ if $p=\infty$), and let $Y_p$ be the dual of the $q$-convexification $T^{(q)}$ of the Figiel-Johnson Tsirelson space. Theorem \ref{main1} solves this problem of isomorphically determining which spaces (and operators) attain the supremum mentioned in Theorem \ref{gkl}.

Along the way, we offer a short proof of the fact that asymptotic moduli pass in the nicest way possible to injective tensor products of operators. This extends a recent result of \cite{GLR}, in which it was shown that the property of strong asymptotic uniform smoothness passes of two Banach spaces implies asymptotic uniform smoothness of the injective tensor product.  This result also has as a corollary one of the main theorems from \cite{DK}, in which it was shown that the Szlenk power type of an injective tensor product of two non-zero Banach spaces is the maximum of the Szlenk power type of the individual spaces.

\begin{theorem} Let $A_0:X_0\to Y_0$, $A_1:X_1\to Y_1$ be operators and fix $1<p<\infty$.  If $A_0$, $A_1$ each have any of the following properties, so does the injective tensor product $A_0\otimes A_1:X_0\hat{\otimes}_\ee X_1\to Y_0\hat{\otimes}_\ee Y_1$: \begin{enumerate}[(i)]\item Asymptotic uniform smoothness. \item $p$-asymptotic uniform smoothness. \item Asymptotic uniform flatness. \end{enumerate}

\label{main2}

\end{theorem}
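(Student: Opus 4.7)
The plan is to reduce all three statements to a single estimate on the asymptotic uniform smoothness modulus: for some constant $C$ depending only on $\|A_0\|$ and $\|A_1\|$,
\[
\overline{\rho}_{A_0\otimes A_1}(t)\leq C\bigl(\overline{\rho}_{A_0}(t)+\overline{\rho}_{A_1}(t)\bigr).
\]
Each of the properties (i)--(iii) is a decay or vanishing condition on $\overline{\rho}$ near $0$---respectively $\overline{\rho}(t)=o(t)$, $\overline{\rho}(t)\lesssim t^p$, and $\overline{\rho}(t)=0$ on some $(0,t_0)$---and each is preserved under the displayed inequality, so the three conclusions follow simultaneously. To prove the estimate, I would factor $A_0\otimes A_1=(A_0\otimes I_{Y_1})\circ(I_{X_0}\otimes A_1)$ and use a composition bound for $\overline{\rho}$ to reduce to the key lemma: for every operator $A:X\to Y$ and every Banach space $Z$,
\[
\overline{\rho}_{A\otimes I_Z}(t)\leq \overline{\rho}_A(t),
\]
with the symmetric bound $\overline{\rho}_{I_Z\otimes A}(t)\leq\overline{\rho}_A(t)$ proved identically.

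The lemma rests on the canonical identification of $u\in X\hat{\otimes}_\ee Z$ with the compact operator $T_u:Z^*\to X$ defined by $T_u(z^*)=(I_X\otimes z^*)(u)$, for which $\|T_u\|=\|u\|_\ee$, together with the dual representations
\[
\|u\|_\ee=\sup_{z^*\in B_{Z^*}}\|T_u(z^*)\|_X\qquad\text{and}\qquad\|(A\otimes I_Z)u\|_\ee=\sup_{z^*\in B_{Z^*}}\|A(T_u(z^*))\|_Y.
\]
For $u\in B_{X\hat{\otimes}_\ee Z}$, the set $K_u:=T_u(B_{Z^*})$ is norm-compact in $B_X$ because $T_u$ is a compact operator that is $w^*$-to-norm continuous on bounded sets. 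A finite $\ee$-net $v_1,\dots,v_n$ in $K_u$, together with cofinite subspaces $V_i\subseteq X$ witnessing $\overline{\rho}_A(t,v_i)\leq \overline{\rho}_A(t)+\ee$, produces a single cofinite $V=\bigcap_i V_i$ controlling the modulus uniformly over $K_u$. The functionals $x_1^*,\dots,x_n^*$ cutting out $V$ induce contractions $x_j^*\otimes I_Z:X\hat{\otimes}_\ee Z\to Z$ whose simultaneous kernels capture precisely the perturbations $w$ satisfying $T_w(B_{Z^*})\subseteq V$.

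The main obstacle is that this simultaneous-kernel subspace has infinite codimension in $X\hat{\otimes}_\ee Z$, so the cofinite-subspace definition of $\overline{\rho}$ cannot be applied directly. I would resolve this by first verifying the equivalent description of $\overline{\rho}_A(t,u)$ via weakly null nets in $B_X$,
\[
\overline{\rho}_A(t,u)=\sup\bigl\{\limsup_\alpha\|A(u+tw_\alpha)\|_Y-\|Au\|_Y:(w_\alpha)\subseteq B_X\text{ weakly null}\bigr\},
\]
(standard but requiring a careful Hahn--Banach argument without separability). Then for any weakly null net $(w_\alpha)\subseteq B_{X\hat{\otimes}_\ee Z}$ and maximizers $z^*_\alpha\in B_{Z^*}$ for $\|(A\otimes I_Z)(u+tw_\alpha)\|_\ee$, passing to a weak$^*$-convergent subnet so that $T_u(z^*_\alpha)\to v\in K_u$ in norm, and exploiting the weak convergence $(x_j^*\otimes I_Z)(w_\alpha)\to 0$ in $Z$ via an ultrafilter argument, yields the required pointwise bound $\|A(T_u(z^*_\alpha)+tT_{w_\alpha}(z^*_\alpha))\|\leq\|A(T_u(z^*_\alpha))\|+\overline{\rho}_A(t)+O(\ee)$, which suffices to establish the lemma and hence the theorem via the factorization.
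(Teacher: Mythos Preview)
Your key lemma is false, and the whole strategy collapses with it. You claim that for every operator $A:X\to Y$ and every Banach space $Z$ one has $\overline{\rho}_{A\otimes I_Z}(t)\leq\overline{\rho}_A(t)$. Take $A=I_{c_0}$ and $Z=\ell_1$. Then $A\otimes I_Z=I_{c_0\hat{\otimes}_\ee\ell_1}=I_{c_0(\ell_1)}$. The space $c_0$ is asymptotically uniformly flat, so $\overline{\rho}_{I_{c_0}}(\sigma)=0$ for $\sigma\leq 1$. But $c_0(\ell_1)$ contains an isometric copy of $\ell_1$ in its first coordinate; taking $y=(e_1,0,0,\ldots)$ and any weakly null net in $\sigma S_{\ell_1}$ (such nets exist, since $0$ lies in the weak closure of the sphere of any infinite-dimensional space) embedded in that first coordinate gives $\limsup_\lambda\|y+x_\lambda\|=1+\sigma$. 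Hence $\overline{\rho}_{I_{c_0(\ell_1)}}(\sigma)=\sigma$, not $0$. More generally, your lemma would imply that $X\hat{\otimes}_\ee Z$ is asymptotically uniformly smooth whenever $X$ is, with no hypothesis on $Z$ whatsoever; this is plainly wrong and should have been a red flag for the factorization $(A_0\otimes I_{Y_1})\circ(I_{X_0}\otimes A_1)$, which throws away all asymptotic information about $A_1$.

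The gap in your argument is precisely where you say ``exploiting the weak convergence $(x_j^*\otimes I_Z)(w_\alpha)\to 0$ in $Z$ via an ultrafilter argument.'' You need $x_j^*\bigl(T_{w_\alpha}(z^*_\alpha)\bigr)\to 0$ in order to push $T_{w_\alpha}(z^*_\alpha)$ into your cofinite subspace $V$. But $x_j^*\bigl(T_{w_\alpha}(z^*_\alpha)\bigr)=z^*_\alpha\bigl((x_j^*\otimes I_Z)(w_\alpha)\bigr)$, and here \emph{both} the functional $z^*_\alpha$ and the vector $(x_j^*\otimes I_Z)(w_\alpha)$ vary with $\alpha$. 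Weak convergence of the vector to $0$ in $Z$ gives you nothing against a moving functional in $B_{Z^*}$; no ultrafilter rescues this. In the counterexample above, the maximizing $z^*_\alpha\in B_{\ell_\infty}$ can track the support of $w_\alpha$ so that the pairing stays bounded away from $0$.

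The paper does not attempt to bound $\rho_{A_0\otimes A_1}$ directly by $\rho_{A_0}$ and $\rho_{A_1}$. Instead it passes to the dual modulus $\delta^{\text{weak}^*}$: norming functionals on $Y_0\hat{\otimes}_\ee Y_1$ are elementary tensors $y_0^*\otimes y_1^*$, and a weak$^*$-convergent net of such tensors can be split so that the drift in at least one factor is controlled by $\delta^{\text{weak}^*}(\cdot,A_i)$. This yields an inequality of the form ``$\min_i\delta^{\text{weak}^*}(\tau,A_i)\geq\sigma\tau$ implies $\rho(\sigma/2R,A_0\otimes A_1)\leq\sigma\tau$,'' from which all three statements follow by the Young duality between $\rho$ and $\delta^{\text{weak}^*}$. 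The essential point you are missing is that control of the tensor product genuinely requires asymptotic information from \emph{both} factors simultaneously.
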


Of course, this theorem concerning injective tensor products gives information on certain spaces of compact operators. 

\begin{theorem} Let $X,Y$ be Banach spaces such that either $X^*$ or $Y$ has the approximation property. Then for any $1<p<\infty$, the space $\mathcal{K}(X,Y)$ of compact operators from $X$ into $Y$ has the following properties as long as $X^*$ and $Y$ do. \begin{enumerate}[(i)]\item Asymptotic uniform smoothness. \item $p$-asymptotic uniform smoothness. \item Asymptotic uniform flatness. \end{enumerate}

\noindent In particular, if $X$ is reflexive, $q$-asymptotically uniformly convex, and $Y$ is $p$-asymptotically uniformly smooth, where $1/p+1/q=1$, then $\mathcal{K}(X,Y)$ is $p$-asymptotically uniformly smooth.

\end{theorem}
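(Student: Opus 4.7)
My plan is to reduce the statement to Theorem \ref{main2} via the standard identification of $\mathcal{K}(X,Y)$ with an injective tensor product. First I would recall that when either $X^*$ or $Y$ has the approximation property, finite rank operators are norm-dense in $\mathcal{K}(X,Y)$, and this yields an isometric isomorphism
\[
X^* \hat{\otimes}_\ee Y \cong \mathcal{K}(X,Y),
\]
under which the injective tensor product $I_{X^*} \otimes I_Y$ of the identity operators corresponds precisely to the identity operator on $\mathcal{K}(X,Y)$.

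Next I would apply Theorem \ref{main2} with $A_0 = I_{X^*}$ and $A_1 = I_Y$. For identity operators, having any of properties (i)--(iii) as an operator is definitionally the same as the underlying space having that property, so the hypothesis that $X^*$ and $Y$ each have the given asymptotic smoothness property says exactly that $I_{X^*}$ and $I_Y$ have it. Theorem \ref{main2} then transfers the property to $I_{X^*}\otimes I_Y$, and via the isometric identification above, this means $\mathcal{K}(X,Y)$ itself has the property, establishing all three cases (i)--(iii) simultaneously.

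For the ``in particular'' clause, I would invoke the duality between asymptotic uniform convexity and asymptotic uniform smoothness: if $X$ is reflexive and $q$-asymptotically uniformly convex, then its dual $X^*$ is $p$-asymptotically uniformly smooth, where $1/p+1/q=1$. Combined with the $p$-asymptotic uniform smoothness of $Y$, the main part of the theorem just established applies to yield that $\mathcal{K}(X,Y)$ is $p$-asymptotically uniformly smooth.

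To the extent that there is an obstacle at all, it lies entirely in the bookkeeping for the identification $\mathcal{K}(X,Y)=X^*\hat{\otimes}_\ee Y$: the approximation property hypothesis is exactly what guarantees that finite rank operators are dense in the compact operators, making the canonical embedding $X^*\hat{\otimes}_\ee Y\hookrightarrow \mathcal{K}(X,Y)$ an isometric surjection. Everything beyond this reduction is a direct invocation of Theorem \ref{main2} together with the standard duality for asymptotic moduli, so no further technical work should be required.
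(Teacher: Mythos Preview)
Your proposal is correct and matches the paper's intended route: the paper does not give a separate proof of this theorem but presents it immediately after Theorem \ref{main2} with the remark that the tensor product result ``gives information on certain spaces of compact operators,'' and the only bridge needed is the standard isometric identification $X^*\hat{\otimes}_\ee Y\cong \mathcal{K}(X,Y)$ under the approximation property hypothesis, together with the convexity/smoothness duality for the ``in particular'' clause. Your write-up simply makes explicit the reduction the paper leaves to the reader.
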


In the final section of the paper, we study classes of $p$-asymptotically uniformly smoothable operators, $\mathfrak{T}_p$, and asymptotically uniform flattenable operators, $\mathfrak{T}_\infty$. Regarding these classes, we prove the following.

\begin{theorem} For each $1<p\leqslant \infty$, there exists an ideal norm $\mathfrak{t}_p$ on $\mathfrak{T}_p$ making $(\mathfrak{T}_p, \mathfrak{t}_p)$ a Banach ideal.

\end{theorem}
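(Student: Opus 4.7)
The plan is to construct the ideal norm directly from the characterization in Theorem~\ref{main1}. For an operator $A:X\to Y$, let $\tau_p(A)$ denote the least constant $C\geqslant 0$ witnessing the $\ell_p$ upper tree estimate for $A$ (that is, for every weakly null tree $(x_t)_{t\in\nn^{<\omega}}$ in $B_X$, every branch $t_1<\cdots<t_n$, and all scalars $(a_i)_{i=1}^n$, $\|\sum_i a_i A x_{t_i}\|\leqslant C(\sum_i|a_i|^p)^{1/p}$; replace the $\ell_p$ norm by $\sup_i|a_i|$ when $p=\infty$), and set $\mathfrak{t}_p(A):=\max\{\|A\|,\tau_p(A)\}$. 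By Theorem~\ref{main1}, $\mathfrak{t}_p(A)<\infty$ if and only if $A\in\mathfrak{T}_p$.

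The Pietsch ideal norm axioms would then be verified in turn. Scalar homogeneity is immediate; the triangle inequality for $\tau_p$ follows by testing both operators on a common weakly null tree and using the triangle inequality in $\ell_p$; non-degeneracy is inherited from $\|\cdot\|$. For the ideal property $\mathfrak{t}_p(BAC)\leqslant\|B\|\,\mathfrak{t}_p(A)\,\|C\|$, observe that weak-to-weak continuity of $C$ guarantees that $(Cw_t/\|C\|)_{t\in\nn^{<\omega}}$ is a weakly null tree in $B_X$ whenever $(w_t)$ is one in $B_W$; the tree estimate for $A$ applied to this tree yields the bound for $\tau_p$, while the operator-norm part is routine. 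For a rank-one operator $A=y\otimes x^*$, one has $\tau_p(A)=0$ since $x^*(x_{t_i})\to 0$ along any weakly null branch, so passage to a subtree shrinks the coefficient sums arbitrarily; hence $\mathfrak{t}_p(y\otimes x^*)=\|A\|=\|x^*\|\|y\|$.

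The main obstacle, as usual in such constructions, is completeness of $(\mathfrak{T}_p(X,Y),\mathfrak{t}_p)$. Given a $\mathfrak{t}_p$-Cauchy sequence $(A_n)$, the bound $\|A_n-A_m\|\leqslant\mathfrak{t}_p(A_n-A_m)$ forces operator-norm convergence to some $A\in\mathcal{L}(X,Y)$. To show $A\in\mathfrak{T}_p$ and $\mathfrak{t}_p(A_n-A)\to 0$, fix $\ee>0$ and choose $N$ with $\mathfrak{t}_p(A_n-A_m)<\ee$ for $n,m\geqslant N$. For every weakly null tree $(x_t)$ in $B_X$, every branch $t_1,\ldots,t_k$, and all scalars $(a_i)$, one has $\|\sum a_i(A_n-A_m)x_{t_i}\|\leqslant\ee(\sum|a_i|^p)^{1/p}$. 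Because each $x_{t_i}$ is a fixed element, sending $m\to\infty$ and using operator-norm convergence preserves the inequality:
\[
\left\|\sum a_i(A_n-A)x_{t_i}\right\|\leqslant\ee\left(\sum|a_i|^p\right)^{1/p}.
\]
This uniform bound gives $\tau_p(A_n-A)\leqslant\ee$ for $n\geqslant N$, whence the triangle inequality yields $\tau_p(A)\leqslant\tau_p(A_N)+\ee<\infty$, so $A\in\mathfrak{T}_p$, and $\mathfrak{t}_p(A_n-A)\to 0$ follows.
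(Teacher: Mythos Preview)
Your definition of $\tau_p(A)$ is not the one characterized in Theorem~\ref{main1}. You require the inequality $\|\sum_i a_i A x_{t_i}\|\leqslant C\|(a_i)\|_{\ell_p}$ to hold for \emph{every} branch of every weakly null tree, but the upper tree estimate in the paper (and in the literature) only asks that every weakly null tree have \emph{some} branch on which the estimate holds (equivalently, that the set of good finite branches be ``big''). With your definition, $\tau_p(A)=\infty$ for any non-zero operator on an infinite-dimensional space: given $x\in B_X$ with $Ax\neq 0$, one can build a weakly null tree whose branch through the first child at every level is the constant sequence $x,x,x,\ldots$, and then $\|\sum_{i=1}^n Ax_{t_i}\|=n\|Ax\|$ is not dominated by $n^{1/p}$. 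So your $\tau_p$ does not recover $\mathfrak{T}_p$.

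If one repairs the definition to the correct ``some branch'' version, the triangle-inequality and ideal-property arguments survive (using that the intersection of two big sets is big, which is exactly how the paper handles subadditivity), but your completeness argument breaks. You fix a tree, fix a branch, and let $m\to\infty$; however, under the correct definition the branch along which $\|\sum a_i(A_n-A_m)x_{t_i}\|\leqslant\ee\|(a_i)\|_{\ell_p}$ holds depends on $m$, and countable intersections of big sets need not be big, so there is no single branch on which the limit can be taken. This is precisely the obstacle the paper's proof is designed to overcome: rather than a direct limiting argument, it invokes the renorming theorem (Remark~\ref{useful remark}) to put a uniformly good norm on $Y$ for each difference $A_n-A_{n+1}$, and then factors $A_1-A=\sum_n(A_n-A_{n+1})$ through an $\ell_p$-direct sum to transfer the uniform $\textbf{t}_p$-bound back to a $T_p$-bound. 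Your pointwise-limit approach cannot replace this step.
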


\section{Definitions and main theorems}

Throughout, we will work over the scalar field $\mathbb{K}$, which is either $\mathbb{R}$ or $\mathbb{C}$. By ``operator,'' we shall mean continuous, linear operator. 

Given an operator $A:X\to Y$, we define the \emph{modulus of asymptotic uniform smoothness} of a non-zero operator $A$ by $$\rho(\sigma)=\sup\Bigl\{\underset{\lambda}{\lim\sup} \|y+Ax_\lambda\|-1: y\in B_Y, (x_\lambda)\subset \sigma B_Y, x_\lambda\underset{\text{weak}}{\to} 0\Bigr\}$$ for $\sigma>0$  and $\rho(0, A)=0$, and the \emph{modulus of weak}$^*$ \emph{asymptotic uniform convexity} of $A$ by $$\delta^{\text{weak}^*}(\tau) = \inf\bigl\{\|y^*+y^*_\lambda\|-1: \|y^*\|=1, \|A^*y^*_\lambda\|\geqslant \tau, y^*_\lambda\underset{\text{weak}^*}{\to}0\bigr\}$$  for $\tau>0$.  We define $\rho(\sigma, A)=0$  when $A$ is the zero operator.  We remark that $\rho(\sigma, A)=0$ for every $\sigma>0$ if and only if $A$ is compact, and $\delta^{\text{weak}^*}(\tau, A)=\infty$ for every $\tau>0$ if and only if $A$ is compact. Otherwise $\delta^{\text{weak}^*}(\tau, A)<\infty$ for every $\tau>0$, and $\delta^{\text{weak}^*}(\cdot, A)$ is continuous. 

We say $A:X\to Y$ is \emph{asymptotically uniformly smooth} if $\lim_{\sigma\to 0^+}\rho(\sigma, A)/\sigma=0$. It is easy to see that $\rho(\cdot, A)$ is convex, so that $\lim_{\sigma\to 0^+}\rho(\sigma, A)/\sigma=\inf_{\sigma>0}\rho(\sigma, A)/\sigma$. For $1<p<\infty$, we say $A:X\to Y$ is $p$-\emph{asymptotically uniformly smooth} provided $\sup_{\sigma>0}\rho(\sigma, A)/\sigma^p<\infty$.  It is easy to see that this is equivalent to the property that there exists $\sigma_1>0$ such that $\sup_{\sigma<\sigma_1} \rho(\sigma, A)/\sigma^p<\infty$.  Finally, we introduce a property previously defined for Banach spaces, but which has not previously been introduced in the literature for operators. We say $A:X\to Y$ is \emph{asymptotically uniformly flat} provided there exists $\sigma>0$ such that $\rho(\sigma, A)=0$.  

We remark on the fact that our definition of the $\rho(\cdot, A)$ modulus, we take the supremum over $y\in B_Y$, while the definition sometimes only takes the supremum over $y\in S_Y$. One can see through the usual duality between $\rho(\cdot, A)$ and $\delta^{\text{weak}^*}(\cdot, A)$, which can be proved using either definition of $\rho(\cdot, A)$, that these differences do not affect whether or not $A$ is asymptotically uniformly smooth, $p$-asymptotically uniformly smooth, or asymptotically uniformly flat.

We note that if $A:X\to Y$ is asymptotically uniformly smooth (resp. $p$-asymptotically uniformly smooth, asymptotically uniformly flat), and if $|\cdot|$ is an equivalent norm on $X$, then $A:(X, |\cdot|)\to Y$ is also asymptotically uniformly smooth (resp. $p$-asymptotically uniformly smooth, asymptotically uniformly flat). Therefore when looking for equivalent norms which gain one of these properties, we are interested only in renorming $Y$. To that end, we say $A:X\to Y$ is \emph{asymptotically uniformly smoothable} if there exists an equivalent norm $|\cdot|$ on $Y$ such that $A:X\to (Y, |\cdot|)$ is asymptotically uniformly smooth. The notions of $p$-asymptotically uniformly smoothable and asymptotically uniformly flattenable are defined similarly. 

A Banach space is said to be asymptotically uniformly smooth, asymptotically uniformly smoothable, $p$-asymptotically uniformly smooth, etc., if its identity operator has that property. We remark that if $I_X:X\to X$ is asymptotically uniformly smoothable (resp. $p$-asymptotically uniformly smoothable, asymptotically uniformly flattenable), there exists an equivalent norm $|\cdot|$ on $X$ such that $I_X:X\to (X, |\cdot|)$ has the corresponding isometric property. By our remarks above, $I_X:(X, |\cdot|)\to (X, |\cdot|)$ also has the same isometric property. Therefore while our renorming theorems only explicitly refer to renorming the range space of the operator, the spatial results we obtain need not distinguish between the space $X$ as the domain or range of its identity.

We define other important quantities related to the $\rho(\cdot, A)$ modulus. For $1<p<\infty$, we let $\textbf{t}_p(A)$ be the infimum of those $C>0$ such that for any $y\in Y$, any $\sigma>0$, and any weakly null net $(x_\lambda)\subset \sigma B_X$, $$\underset{\lambda}{\lim\sup} \|y+Ax_\lambda\|^p \leqslant \|y\|^p + C^p\sigma^p.$$  We obey the convention that the infimum of the empty set is $\infty$.   We let $\textbf{t}_\infty(A)$ be the infimum of those $C>0$ such that for any $y\in Y$, any $\sigma>0$, and any weakly null net $(x_\lambda)\subset \sigma B_X$, $$\underset{\lambda}{\lim\sup} \|y+Ax_\lambda\|\leqslant \max\{\|y\|, C\sigma \}.$$  These quantities are related to the $\rho(\cdot, A)$ modulus by the following proposition. The proof of the proposition involves only some basic calculus, so we omit it. 

\begin{proposition} Let $A:X\to Y$ be an operator. \begin{enumerate}[(i)]\item For any $1<p<\infty$, there exist functions $h_p:[0, \infty)^2\to [0, \infty)$, $f_p:[0, \infty)\to [0, \infty)$ such that if $\textbf{\emph{t}}_p(A)\leqslant C$, $\sup_{\sigma>0}\rho(\sigma, A)/\sigma^p\leqslant f_p(C)$, and if $\sup_{\sigma>0} \rho(\sigma, A)/\sigma^p \leqslant C$ and $\|A\|\leqslant C'$, then $\textbf{\emph{t}}_p(A)\leqslant h_p(C, C')$. \item If $\sigma_0(A)=\inf_{\sigma>0} \rho(1/\sigma, A)=0$, then $\sigma_0(A)\leqslant \textbf{\emph{t}}_\infty(A)\leqslant \sigma_0(A)+\|A\|$.  \end{enumerate}

In particular, $A$ is $p$-asymptotically uniformly smooth if and only if $\textbf{\emph{t}}_p(A)<\infty$, and $A$ is asymptotically uniformly flat if and only if $\textbf{\emph{t}}_\infty(A)<\infty$.

\label{related moduli}

\end{proposition}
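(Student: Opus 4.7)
The plan is to prove each bound by elementary scaling combined with a convexity estimate for $(1+t)^p$, splitting into the regimes $\sigma \leqslant \|y\|$ and $\sigma > \|y\|$.

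For the forward direction of (i), I would assume $\textbf{\emph{t}}_p(A) \leqslant C$; given $y \in B_Y$ and a weakly null net $(x_\lambda) \subset \sigma B_X$, the defining inequality gives $\limsup_\lambda \|y + A x_\lambda\|^p \leqslant 1 + C^p \sigma^p$. Taking $p$-th roots and using $(1+t)^{1/p} \leqslant 1 + t/p$ for $t \geqslant 0$ yields $\rho(\sigma, A) \leqslant C^p\sigma^p/p$, so $f_p(C) := C^p/p$ works.

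For the reverse direction of (i), I would assume $\sup_{\sigma > 0} \rho(\sigma,A)/\sigma^p \leqslant C$ and $\|A\| \leqslant C'$, and fix $y \in Y$, $\sigma > 0$, and a weakly null net $(x_\lambda) \subset \sigma B_X$. If $\sigma \leqslant \|y\|$, I renormalize $\tilde{y} = y/\|y\|$, $\tilde{x}_\lambda = x_\lambda/\|y\|$; applying the hypothesis to $(\tilde{x}_\lambda) \subset (\sigma/\|y\|)B_X$ gives $\limsup_\lambda \|\tilde{y} + A\tilde{x}_\lambda\| \leqslant 1 + C(\sigma/\|y\|)^p$, hence $\limsup_\lambda \|y + A x_\lambda\|^p \leqslant \|y\|^p(1+s)^p$ with $s := C\sigma^p/\|y\|^p \in [0, C]$. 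Convexity of $u \mapsto (1+u)^p$ on $[0, C]$ yields the chord bound $(1+s)^p \leqslant 1 + ((1+C)^p - 1)s/C$, so $\limsup_\lambda \|y + A x_\lambda\|^p \leqslant \|y\|^p + ((1+C)^p - 1)\sigma^p$. If instead $\sigma > \|y\|$, the operator-norm estimate gives $\limsup_\lambda \|y + A x_\lambda\| \leqslant \|y\| + \|A\|\sigma \leqslant (1+C')\sigma$, whence $\limsup_\lambda \|y + A x_\lambda\|^p \leqslant \|y\|^p + (1+C')^p\sigma^p$. Setting $h_p(C, C')^p := \max\{(1+C)^p - 1,\, (1+C')^p\}$ covers both cases.

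For (ii), I read the paper's $\sigma_0(A)$ as $\inf\{\sigma > 0 : \rho(1/\sigma, A) = 0\}$, equivalently $1/\sigma^*(A)$ with $\sigma^*(A) := \sup\{\sigma > 0 : \rho(\sigma, A) = 0\}$ and $1/\infty := 0$; continuity of the convex function $\rho(\cdot, A)$ ensures the supremum is attained when finite. For the lower bound, note that if $C > \textbf{\emph{t}}_\infty(A)$, then for $y \in B_Y$ and $\sigma \leqslant 1/C$ the defining inequality forces $\limsup_\lambda \|y + A x_\lambda\| \leqslant 1$, so $\rho(1/C, A) = 0$ and $\sigma^*(A) \geqslant 1/C$; letting $C \downarrow \textbf{\emph{t}}_\infty(A)$ yields $\sigma_0(A) \leqslant \textbf{\emph{t}}_\infty(A)$. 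For the upper bound, I would rerun the case split: when $\sigma/\|y\| \leqslant \sigma^*(A)$, scaling and $\rho(\sigma/\|y\|, A) = 0$ give $\limsup_\lambda \|y + A x_\lambda\| \leqslant \|y\|$; when $\sigma/\|y\| > \sigma^*(A)$ one has $\|y\| < \sigma_0(A)\sigma$, so $\limsup_\lambda \|y + A x_\lambda\| \leqslant \|y\| + \|A\|\sigma \leqslant (\sigma_0(A) + \|A\|)\sigma$. Together these give $\textbf{\emph{t}}_\infty(A) \leqslant \sigma_0(A) + \|A\|$, and the ``in particular'' statements follow formally from the two bounds, since $\|A\| < \infty$ automatically.

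The main obstacle is the $\sigma \leqslant \|y\|$ case of (i): the naive scaling bound produces $(\|y\| + C\sigma^p/\|y\|^{p-1})^p$, which is not uniformly dominated by $\|y\|^p + K\sigma^p$ as $\|y\|$ varies; this is what forces both the case split onto $\sigma > \|y\|$ and the separate dependence of $h_p$ on $\|A\|$ through $C'$. All remaining steps are the elementary calculus the author alludes to.
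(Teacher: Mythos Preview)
Your argument is correct and complete. The paper omits the proof entirely, saying only that it ``involves only some basic calculus,'' and your write-up supplies precisely that calculus: the scaling by $\|y\|$, the convexity (chord) bound for $(1+u)^p$ on $[0,C]$, and the case split at $\sigma=\|y\|$ are the natural ingredients, and your reading of $\sigma_0(A)$ as $\inf\{\sigma>0:\rho(1/\sigma,A)=0\}$ is the only interpretation under which the stated two-sided inequality and the ``in particular'' clause make sense.
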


Given a directed set $D$, we say a subset $D_0$ of $D$ is \begin{enumerate}[(i)]\item \emph{cofinal} if for any $u\in D$, there exists $v\in D_0$ such that $u\leqslant v$,  \item \emph{eventual} if its complement is not cofinal. \end{enumerate}

We remark that a finite intersection of eventual sets is eventual.

Given a directed set $D$, we let $D^n=\{(u_i)_{i=1}^n: u_i\in D\}$, and $D^{<\nn}=\cup_{n=1}^\infty D^n$. We order $D^{<\nn}$ by letting $s<t$ if for some $u_1, \ldots, u_n$ and $1\leqslant m\leqslant n$, $s=(u_i)_{i=1}^m$ and $t=(u_i)_{i=1}^n$. We let $s\smallfrown t$ denote the concatenation of $s$ and $t$. Given $t=(u_i)_{i=1}^n$ and $1\leqslant m\leqslant n$, we let $t|_m=(u_i)_{i=1}^m$.  We say a subset $B$ of $D^{<\nn}$ is \begin{enumerate}[(i)]\item \emph{inevitable} if $D^1\cap B$ is eventual and for any $n\in \nn$ and $t\in D^n\cap B$, $\{s\in D^{n+1}: t<s\}\cap B$ is eventual, \item \emph{big} if it contains an inevitable subset. \end{enumerate}

It follows easily from the definitions and from the fact that a finite intersection of eventual sets is eventual that a finite intersection of inevitable sets is inevitable and a finite intersection of big sets is big.

If $X$ is a Banach space and $D$ is a weak neighborhood basis at $0$ in $X$, we say a collection $(x_t)_{t\in D^{<\nn}}\subset X$ is \emph{normally weakly null} provided that for any $t=(u_i)_{i=1}^n\in D^{<\nn}$, $ x_t\in u_n$.

Given a Banach space $E$ with basis $(e_i)_{i=1}^\infty$, an operator $A:X\to Y$, and a weak neighborhood basis $D$ at $0$ in $X$, we say $A:X\to Y$ \emph{satisfies} $C$-$E$ \emph{upper tree estimates} provided that for any normally weakly null $(x_t)_{t\in D^{<\nn}}\subset B_X$, $$\bigcup_{n=1}^\infty \Bigl\{t\in D^n: (\forall (a_i)_{i=1}^n\in \mathbb{K}^n)\Bigl(\|A\sum_{i=1}^n a_i x_{t|_i}\|\leqslant C\|\sum_{i=1}^n a_i e_i\|_E\Bigr)\Bigr\}$$ is big. Here, of course, $D$ is ordered by reverse inclusion.  We note that the definition is made in terms of a weak neighborhood basis $D$. We will show in Corollary \ref{shortcut} that the property of satisfying $C$-$E$ upper tree estimates is independent of the choice of $D$, which justifies our regular practice of leaving $D$ unspecified.  We say $A:X\to Y$ satisfies $E$-\emph{upper tree estimates} if there exists some $C>0$ such that $A$ satisfies $C$-$E$ upper tree estimates.   We will be concerned with the case $E=c_0$ or $E=\ell_p$ for some $1<p<\infty$.  

We note that for Banach spaces with separable dual, the notion of $\ell_p$ or $c_0$ upper tree estimates has already been defined in the literature (see, for example, \cite{FOSZ}). Our presentation of the definition differs from what is commonly given in the literature, but we discuss in Section $3$ how our presentation differs from the usual one, but the underlying property coincides with the usual one.

We make one easy observation before moving to our renorming theorem.

\begin{proposition} Let $A:X\to Y$ be an operator. Suppose that $1<p<\infty$, $G\subset B_Y$, $C, \sigma>0$ are such that $\overline{\text{\emph{co}}}(G)=B_Y$, for any $y\in G$, and for any weakly null net $(x_\lambda)\subset \sigma B_X$, $$\underset{\lambda}{\lim\sup} \|y+Ax_\lambda\| \leqslant 1+C\sigma^p.$$ Then $\rho(\sigma, A)\leqslant C\sigma^p$.   

\label{dpt}

\end{proposition}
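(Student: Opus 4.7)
The plan is to take an arbitrary $y\in B_Y$ and an arbitrary weakly null net $(x_\lambda)\subset \sigma B_X$, and to establish $\limsup_\lambda \|y+Ax_\lambda\|\leqslant 1+C\sigma^p$ by passing from the hypothesis on $G$ to convex combinations and then to their norm-closure. After taking the supremum over all such $y$ and $(x_\lambda)$ this yields $\rho(\sigma,A)\leqslant C\sigma^p$.

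First I would fix $\varepsilon>0$ and use $\overline{\text{co}}(G)=B_Y$ to select $n\in\nn$, scalars $t_1,\ldots,t_n\geqslant 0$ with $\sum_{i=1}^n t_i=1$, and vectors $y_1,\ldots,y_n\in G$ such that $\|y-y'\|<\varepsilon$, where $y':=\sum_{i=1}^n t_i y_i$. For each $\lambda$, the triangle inequality gives
$$\|y'+Ax_\lambda\|=\Bigl\|\sum_{i=1}^n t_i(y_i+Ax_\lambda)\Bigr\|\leqslant \sum_{i=1}^n t_i\|y_i+Ax_\lambda\|.$$
Taking $\limsup_\lambda$ and using that for a finite sum of non-negative net-valued functions the $\limsup$ of the sum is bounded by the sum of the $\limsup$'s, I obtain
$$\underset{\lambda}{\lim\sup}\,\|y'+Ax_\lambda\|\leqslant \sum_{i=1}^n t_i\underset{\lambda}{\lim\sup}\,\|y_i+Ax_\lambda\|\leqslant \sum_{i=1}^n t_i(1+C\sigma^p)=1+C\sigma^p,$$
where the last inequality is precisely the hypothesis applied to each $y_i\in G$ (with the same weakly null net $(x_\lambda)$).

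Next, applying the triangle inequality once more yields $\|y+Ax_\lambda\|\leqslant \|y-y'\|+\|y'+Ax_\lambda\|<\varepsilon+\|y'+Ax_\lambda\|$ for every $\lambda$, so
$$\underset{\lambda}{\lim\sup}\,\|y+Ax_\lambda\|\leqslant \varepsilon+1+C\sigma^p.$$
Since $\varepsilon>0$ was arbitrary, this gives $\limsup_\lambda\|y+Ax_\lambda\|\leqslant 1+C\sigma^p$. Taking the supremum over $y\in B_Y$ and weakly null nets $(x_\lambda)\subset \sigma B_X$ finishes the proof.

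There is essentially no serious obstacle here; the only point requiring any care is the handling of $\limsup$ for nets (rather than sequences) in the convex-combination step, which is a standard fact following from the monotonicity and finite subadditivity of $\limsup$ for non-negative net-valued functions. The statement is really just the observation that the bound defining $\rho(\sigma,A)$ is preserved under convex combinations and under norm closure in the $y$-variable, which is why it suffices to check it on a set $G$ whose closed convex hull is $B_Y$.
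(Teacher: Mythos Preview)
Your proof is correct and follows essentially the same approach as the paper's own proof: approximate $y\in B_Y$ by a finite convex combination of elements of $G$, apply the triangle inequality and subadditivity of $\limsup$, and then let the approximation error tend to zero. The paper's version is slightly more condensed (it combines the two triangle-inequality steps into a single chain of inequalities), but there is no substantive difference.
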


\begin{proof}Fix $y\in B_Y$, $\sigma>0$, and a weakly null net $(x_\lambda)\subset \sigma B_X$. Fix $\delta>0$, positive numbers $a_1, \ldots, a_n$ summing to $1$, and $y_1, \ldots, y_n\in G$ with $\|y-\sum_{i=1}^n a_i y_i\|<\delta$.   Then $$\underset{\lambda}{\lim\sup} \|y+Ax_\lambda\| \leqslant \delta +\sum_{i=1}^n a_i \underset{\lambda}{\lim\sup} \|y_i+Ax_\lambda\| \leqslant \delta +1+C\sigma^p.$$ This easily yields the result.

\end{proof}

We now turn to the proof of Theorem \ref{main1}. 

\begin{theorem} Fix an operator $A:X\to Y$. \begin{enumerate}[(i)]\item $A$ is asymptotically uniformly flattenable if and only if $A$ satisfies $C$-$c_0$ upper tree estimates. \item For any $1<p<\infty$, $A$ is $p$-asymptotically uniformly smoothable if and only if  $A$ satisfies $C$-$\ell_p$ upper tree estimates. \end{enumerate}

\label{renorming1}

\end{theorem}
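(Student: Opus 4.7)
The plan is to prove both (i) and (ii) simultaneously, treating (i) as the case $p=\infty$ with $\ell_\infty$-sums read as maxima so that the two arguments run in parallel. Each direction is handled separately.

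For necessity, fix an equivalent norm $|\cdot|$ on $Y$ witnessing $p$-asymptotic uniform smoothness; Proposition \ref{related moduli} supplies a finite $C_0 \geq \textbf{t}_p(A)$ computed under $|\cdot|$. Given a normally weakly null $(x_t)_{t\in D^{<\nn}} \subset B_X$, I construct by induction on the level $n$ an inevitable subset $T \subset D^{<\nn}$ along which, for some fixed $C_1 > C_0$ and every coefficient vector, $|A\sum_{i=1}^n a_i x_{t|_i}| \leq C_1 (\sum |a_i|^p)^{1/p}$; translating to the original norm via norm equivalence then gives $C$-$\ell_p$ UTE for a slightly larger constant. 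At each induction step, given $t \in T \cap D^{n-1}$, the net $(x_{t\smallfrown(u)})_u$ is weakly null in $B_X$, so the $\textbf{t}_p(A)$ inequality supplies, for each individual coefficient vector $(a_i)_{i=1}^n$, an eventual set of $u$ at which the level-$n$ bound follows from the level-$(n-1)$ one. The only delicate point is passing from pointwise-in-coefficients to uniform-in-coefficients: I use positive homogeneity to reduce to the unit sphere of $\ell_p^n$, cover this compact sphere by a finite $\varepsilon$-net, take the intersection of the corresponding finitely many eventual sets (still eventual), and absorb the approximation error using the uniform bound $\sup_t |Ax_t| \leq |A|$.

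For sufficiency, in the spirit of Pisier's renorming, I define
\[
|y|^p := \sup\Bigl\{\,\bigl\|y + A\textstyle\sum_{i=1}^n a_i x_{t|_i}\bigr\|^p - (C')^p\textstyle\sum_{i=1}^n |a_i|^p\,\Bigr\}
\]
with a fixed $C' > C$, where the supremum is taken over $n \in \nn$ (including $n=0$ as the empty tree), normally weakly null trees $(x_t)_{t\in D^{<\nn}} \subset B_X$, nodes $t \in D^n$ in the big witness set for that tree, and coefficient vectors $(a_i)_{i=1}^n$. The empty-tree term yields $|y|^p \geq \|y\|^p$, and the $C$-$\ell_p$ UTE together with the triangle inequality yields $\|y + A\sum a_i x_{t|_i}\| \leq \|y\| + C(\sum |a_i|^p)^{1/p}$ on the big set; a one-variable calculus fact then gives $\sup_{\beta \geq 0}[(\alpha+\beta)^p - (C'/C)^p \beta^p] = K\alpha^p$ for some finite $K$ (exactly because $C' > C$), producing the required upper bound $|y|^p \leq K\|y\|^p$. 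To verify $\textbf{t}_p(A) \leq C'$ under $|\cdot|$, given a weakly null $(z_\lambda) \subset \sigma B_X$ and $y \in Y$, I reindex by $D$ via a subnet with $z_{\lambda_u}/\sigma \in u$ and, given a nearly optimal admissible configuration for $|y + Az_{\lambda_u}|$, prepend $\tilde x_{(u)} := z_{\lambda_u}/\sigma$ at the root with coefficient $a'_1 = \sigma$, obtaining an admissible configuration in the sup defining $|y|$ with extra penalty exactly $(C')^p \sigma^p$. The $c_0$ case is handled with the analogous renorming using an $\ell_\infty$-style penalty to extract $\textbf{t}_\infty(A) < \infty$.

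The principal obstacle is the sufficiency renorming, specifically bounding $|\cdot|$ above by a multiple of $\|\cdot\|$: one must convert the $C$-$\ell_p$ UTE (which guarantees a big witness set \emph{per tree}) into a uniform bound over \emph{all} admissible trees and coefficients simultaneously, and the strict inequality $C' > C$ in the penalty is exactly what makes the scalar optimization finite. A secondary delicate point is the absorption step: the reindexing of $(z_\lambda)$ via $D$ (and the subnet arguments needed to preserve $\limsup$) requires care, as does verifying that prepending the new root to the original tree yields another normally weakly null tree whose big witness set legitimately contains the extended nodes needed.
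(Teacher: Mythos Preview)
Your necessity argument matches the paper's: iterate the $\textbf{t}_p$ inequality level by level, use compactness of $B_{\ell_p^n}$ (or $B_{\ell_\infty^n}$) to pass from single coefficient vectors to all of them, and absorb the errors into $C>\textbf{t}_p(A)$.

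The sufficiency direction has a genuine gap in the absorption step. Your Pisier functional restricts the supremum to nodes $t$ in the big witness set $B$ on which the $C$-$\ell_p$ bound holds for the given tree; this restriction is exactly what yields $|y|^p\leq K\|y\|^p$, but it also breaks absorption. When you prepend $z_{\lambda_u}/\sigma$ to a tree $(x^u_s)$, the extended node $(u)\smallfrown t$ lies in the witness set $\tilde B$ of the \emph{new} tree only if
\[
\Bigl\|A\Bigl(b_0\, z_{\lambda_u}/\sigma+\sum_{i\geq 1} b_i x^u_{t|_i}\Bigr)\Bigr\|\leq C\|(b_i)_{i\geq 0}\|_{\ell_p}\qquad\text{for every }(b_i),
\]
and nothing provides this: $t\in B^u$ controls only the tail $\sum_{i\geq 1}$. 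So the configuration you produce for $|y|$ is not admissible, and $|y|^p\geq |y+Az_{\lambda_u}|^p-(C')^p\sigma^p$ does not follow. You flag this as a ``delicate point,'' but it is the crux, not a detail, and I do not see how to push the sup-over-the-big-set definition through.

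The paper sidesteps this by inverting the quantifiers. It defines $[y]$ as the infimum of thresholds $C_2$ such that, for \emph{every} normally weakly null tree, the set of $t$ with $C^{-p}\|y+A\sum a_ix_{t|_i}\|^p-\sum|a_i|^p\leq C_2^p$ (for all $(a_i)$) is big. Absorption then runs by contradiction: if $[y+A\sigma x_u]^p>1+\mu^p$ for every $u$, each $u$ carries a tree whose good set $B_u$ is \emph{not} big; concatenating these under the roots $x_u$ and using $[y]<1$ gives an inevitable set for the concatenated tree, inside which specializing the first coefficient to $\sigma$ manufactures a big set $B'_{u_1}$. Since $B_{u_1}$ is not big, some $t\in B'_{u_1}\setminus B_{u_1}$ exists, and comparing the two bounds forces $\mu\leq\sigma$. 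The non-bigness of $B_u$---available precisely because $[\cdot]$ is a threshold infimum rather than a configuration supremum---is what replaces the admissibility you cannot verify. A norm is then obtained as the Minkowski functional of the closed convex hull of $\{y:[y]<1\}$; for $p=\infty$ the analogous threshold functional $g$ is already convex and no hull is needed.
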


\begin{proof}[Proof of Theorem \ref{renorming1}] $(i)$ Assume $\rho(1/\sigma_0, A)=0$ and note that $\textbf{t}_\infty(A)<\infty$ by Proposition \ref{related moduli}. Fix any number $C>\textbf{t}_\infty(A)$. Fix positive numbers $(\ee_n)_{n=1}^\infty$ such that $\textbf{t}_\infty(A)+\sum_{i=1}^\infty \ee_i <C$.  Fix $(x_t)_{t\in D^{<\nn}}\subset B_X$. Let $x_\varnothing=0$. For each $n\in \nn$, let $$B_n'=\Bigl\{t\in D^n: (\forall (a_i)_{i=0}^n\in B_{\ell_\infty^n} )\Bigl(\|A\sum_{i=0}^n a_ix_{t|_i}\|\leqslant \ee_n + \max\{\|A\sum_{i=0}^{n-1} a_i x_{t_i}\|, \textbf{t}_\infty(A)|a_n|\}\Bigr)\Bigr\}.$$ It follows from compactness of $B_{\ell_\infty^n}$ and the definition of $\textbf{t}_\infty(A)$ that  $$B':=\{t\in D^{<\nn}:(\forall 1\leqslant i\leqslant |t|)(t|_i\in B_i')\}$$ is inevitable. It then follows that $$B=\bigcup_{n=1}^\infty \{t\in B_n: (\forall (a_i)_{i=1}^n\in \mathbb{K}^n)(\|A\sum_{i=1}^n a_i x_{t|_i}\|\leqslant C\|(a_i)_{i=1}^n\|_{\ell_\infty^n}\}\supset B'$$ is big. Thus $A$ satisfies $C$-$c_0$ upper tree estimates for any $C>\textbf{t}_\infty(A)$.  

Now assume $Y$ satisfies $C$-$c_0$ upper tree estimates. Let us choose a weak neighborhood basis $D$ at $0$ in $X$ consisting of balanced sets.  For each $y\in Y$, let $g(y)$ denote the infimum of those $C_1$ such that for any normally weakly null $(x_t)_{t\in D^{<\nn}}\subset B_X$, $$B_y(C_1):=\bigcup_{n=1}^\infty \Bigl\{t\in D^n:\|y+A\sum_{i=1}^n x_{t|_i}\| \leqslant C_1\Bigr\}$$ is big.  We first claim that $g$ satisfies the following properties: 

\begin{enumerate}[(a)]\item $g$ is $1$-Lipschitz, \item $\|y\|\leqslant g(y)\leqslant \|y\|+C$ for any $y\in Y$,  \item for any unimodular scalar $\ee$, $g(\ee y)=g(y)$, \item $g$ is convex. \end{enumerate} Items $(a)$-$(c)$ are evident from the definitions of $C$-$c_0$ upper tree estimates, normally weakly null, and $g$. For convexity, fix $y, y'\in Y$ and $0<\alpha<1$. Fix $C_1>g(y)$ and $C_1'>g(y')$. Fix a normally weakly null $(x_t)_{t\in D^{<\nn}}\subset B_X$ and let $B_{\alpha y+(1-\alpha)y'}(\alpha C_1+(1-\alpha)C_1')$, $B_y(C_1)$, and $B_{y'}(C_1')$ be as in the definition of $g$. Then by the triangle inequality, $$B_{\alpha y+(1-\alpha)y'}(\alpha C_1+(1-\alpha)C_1')\supset B_y(C_1)\cap B_{y'}(C_1').$$ Since the intersection of two big sets is big and a superset of a big set is big, $B_{\alpha y+(1-\alpha)y'}(\alpha C_1+(1-\alpha)C_1')$ is big. Since this holds for any $C_1>g(y)$ and $C_1'>g(y')$ and any normally weakly null $(x_t)_{t\in D^{<\nn}}\subset B_X$,  $g(\alpha y+(1-\alpha)y') \leqslant \alpha g(y)+(1-\alpha) g(y')$, and $g$ is convex.

Now let $G=\{y\in Y: g(y)\leqslant 1+C\}$. Note that by $(a)$-$(d)$ above, this is the unit ball of a norm $|\cdot|$ on $Y$. If $y\in B_Y$, then $g(y)\leqslant \|y\|+C\leqslant 1+C$, and $y\in G$. If $g(y)\leqslant 1+C$, then $\|y\|\leqslant g(y)\leqslant 1+C$, so $B_Y\subset G\subset (1+C)B_Y$, and $|\cdot|$ is $(1+C)$-equivalent to $\|\cdot\|$. We finally claim that $\rho(1, A:X\to (Y, |\cdot|))=0$.  For this, it is sufficient to show that for any weakly null net $(x_\lambda)\subset B_X$ and any $y\in G$, $$\underset{\lambda}{\lim\sup}g(y+Ax_\lambda)\leqslant 1+C.$$  To obtain a contradiction, assume $\mu>0$, $y\in G$, and $(x_u)_{u\in D}$ are such that $x_u\in u$ and $g(y+Ax_u)\geqslant 1+C+2\mu$ for all $u\in D$. Now for each $u\in D$, we may fix a normally weakly null collection $(x^u_t)_{t\in D^{<\nn}}\subset B_X$ such that with $$B_u=\bigcup_{n=1}^\infty \Bigl\{t\in D^{<\nn}: \|y+Ax_u+A\sum_{i=1}^n x_{t|_i}^u\|\leqslant 1+C+\mu\Bigr\},$$ $B_u$ is not big. We now let $x_{(u)}=x_u$ and $x_{(u)\smallfrown t}=x^u_t$. Now if $$B'\subset B:=\bigcup_{n=1}^\infty \Bigl\{t\in D^n: \|y+A\sum_{i=1}^n x_{t|_i}\|\leqslant 1+C+\mu\Bigr\}$$ is inevitable and $u\in B'\cap D^1$, then $B_u$ would be big. Since no $B_u$ is big, we deduce that $B$ cannot be big, which contradicts $g(y)\leqslant 1+C$.  This contradiction finishes $(i)$.

$(ii)$ We argue in a manner similar to $(i)$. If $\sup_{\sigma>0} \rho(\sigma, A)/\sigma^p<\infty$, then by Proposition \ref{related moduli}, $\textbf{t}_p(A)<\infty$. Now fix $C>\textbf{t}_p(A)$ and a sequence $(\ee_n)_{n=1}^\infty$ of positive numbers such that $\textbf{t}_p(A)^p+\sum_{n=1}^\infty \ee_n <C^p$.  We argue as in $(i)$ to find a big set $B\subset D^{<\nn}$ such that for any $n\in \nn$ and $t\in B\cap D^n$, $$\|A\sum_{i=1}^n a_i x_{t|_i}\|^p \leqslant \|A\sum_{i=1}^{n-1} a_i x_{t|_i}\|^p + \textbf{t}_p(A)^p |a_n|^p +\ee_n$$ for any $(a_i)_{i=1}^\infty \in B_{\ell_p^n}$. Iterating, we deduce that for any $t\in B\cap D^n$ and any $(a_i)_{i=1}^n\in B_{\ell_p^n}$, $$\|A\sum_{i=1}^n a_i x_i\|^p \leqslant \|A\sum_{i=1}^{n-1} a_i x_{t|_i}\|^p+\textbf{t}_p(A)^p |a_n|^p +\ee_n \leqslant\ldots \leqslant \textbf{t}_p(A)^p\sum_{i=1}^n |a_i|^p+ \sum_{i=1}^n \ee_i <C^p.$$  From this we can easily  deduce that $A$ satisfies $C$-$\ell_p$ upper tree estimates.

Now assume that $A$ satisfies $C_1$-$\ell_p$ upper tree estimates with $C_1\geqslant 1$.  Then with $C=2C_1 $, we deduce that for any $y\in Y$ and any normally weakly null $(x_t)_{t\in D^{<\nn}}\subset B_X$, $$B_y:=\bigcup_{n=1}^\infty \Bigl\{t\in D^n: (\forall (a_i)_{i=1}^n \in \ell_p^n)(\|y+A\sum_{i=1}^n a_i x_{t|_i}\|^p\leqslant C^p\bigl(\|y\|^p+ \sum_{i=1}^n |a_i|^p\bigr)\Bigr\}$$ is big.  Indeed, we note that if for some $t\in D^n$, $\|A\sum_{i=1}^n a_i x_{t_i}\|\leqslant C_1 \|(a_i)_{i=1}^n\|_{\ell_p^n}$, then $$\|y+A\sum_{i=1}^n a_i x_{t|_i}\| \leqslant C_1\Bigl(\|y\|+\bigl(\sum_{i=1}^n |a_i|^p\bigr)^{1/p}\Bigr) \leqslant 2C_1  \Bigl(\|y\|^p + \sum_{i=1}^n |a_i|^p\Bigr)^{1/p}.$$   

For $y\in Y$, let $[y]$ be the infimum of those $C_2>0$ such that for any normally weakly null $(x_t)_{t\in D^{<\nn}}\subset B_X$, $$B_y(C_2):=\bigcup_{n=1}^\infty \Bigl\{t\in D^n: (\forall (a_i)_{i=1}^n \in \mathbb{K}^n)\Bigl(\bigl(\frac{1}{C^p}\bigl\|y+A\sum_{i=1}^n a_i x_{t|_i}\bigr\|^p - \sum_{i=1}^n |a_i|^p\bigr)^{1/p} \leqslant C_2\Bigr)\Bigr\}$$ is big.  Note that for any $y\in Y$, $\frac{1}{C}\|y\|\leqslant [y]\leqslant \|y\|$ and for any $y\in Y$ and any scalar $c$, $[cy]=|c|[y]$. Now let $|y|=\inf\Bigl\{\sum_{i=1}^n [y_i]: y=\sum_{i=1}^n y_i\Bigr\}$.  Let $G=\{y\in Y: [y]<1\}$ and note that $\overline{\text{co}}(G)=B_Y^{|\cdot|}$. Moreover, $|\cdot|$ is an equivalent norm on $Y$ such that $\frac{1}{C}\|y\|\leqslant |y|\leqslant \|y\|$ for all $y\in Y$.

Now fix any $y\in G$, $\sigma>0$,  and a weakly null net $(x_\lambda)\subset \sigma B_X$.  Assume that $\mu>0$ is such that $$\underset{\lambda}{\lim\sup} [y+A x_\lambda]^p> 1+ \mu^p.$$  Then there exists $(x_u)_{u\in D}\subset  B_X$ such that $x_u\in u$ for all $u\in D$ and $\inf_{u\in D} [y+A\sigma x_u]^p> 1+\mu^p.$ Then for every $u\in D$, we may fix $(x^u_t)_{t\in D^{<\nn}}\subset B_X$ normally weakly null such that  $$B_u=\bigcup_{n=1}^\infty \Bigl\{t\in D^n: (\forall (a_i)_{i=1}^n \in \mathbb{K}^n)\Bigl(\frac{1}{C^p}\|y+A\sigma x_u+A\sum_{i=1}^n a_i x_{t|_i}^u\|^p-\sum_{i=1}^n |a_i|^p\leqslant 1+\mu^p\Bigr)\Bigr\}$$ fails to be big. Now let $x_{(u)}=x_u$ and $x_{(u)\smallfrown t}=x^u_t$.  Since $[y]<1$,  there exists an inevitable subset $B$ of $D^{<\nn}$ which is contained in $$\bigcup_{n=1}^\infty \Bigl\{t\in D^n: (\forall (a_i)_{i=1}^n\in \mathbb{K}^n)\Bigl(\frac{1}{C^p}\|y+A\sum_{i=1}^n x_{t|_i}\|^p - \sum_{i=1}^n |a_i|^p \leqslant 1\Bigr)\Bigr\}.$$  Now for any $u_1\in B$, $$B'_{u_1}:=\Bigl\{t\in D^n: (\forall (a_i)_{i=1}^n\in \mathbb{K}^{n+1})\Bigl(\frac{1}{C^p}\|y+A\sigma x_{(u_1)}+A\sum_{i=1}^n a_i x_{(u)\smallfrown (t|_i)}\|^p -\sigma^p-\sum_{i=1}^n |a_i|^p\leqslant 1\Bigr\}$$ is big. Since $B_{u_1}$ is not big, there exists $t \in B_{u_1}'\setminus B_{u_1}$.  This means there exist $n\in \nn$ and $(a_i)_{i=1}^n\in \mathbb{K}^n$ such that $$ \frac{1}{C^p}\|y+A\sigma x_{(u_1)}+A\sum_{i=1}^n a_i x_{t|_i}^{u_1}\|^p- \sum_{i=1}^n |a_i|^p \geqslant 1+\mu^p.$$   However, since $t\in B_{u_1}$, $$ \frac{1}{C^p} \|y+A\sigma x_{(u_1)}+A\sum_{i=1}^n a_i x_{t|_i}^{u_1}\|^p - \sum_{i=1}^n |a_i|^p \leqslant 1+\sigma^p.$$  This yields that $\mu\leqslant \sigma$.  From this it follows that $$\underset{\lambda}{\lim\sup} |y+Ax_\lambda|-1\leqslant \underset{\lambda}{\lim\sup} [y+Ax_\lambda] -1\leqslant (1+\sigma^p)^{1/p}-1 \leqslant \sigma^p.$$  Since this holds for any $y\in G$, we deduce that $\rho(\sigma, A)\leqslant \sigma^p$. Since this holds for any $\sigma>0$, $\sup_{\sigma>0} \rho(\sigma, A)/\sigma^p\leqslant 1$.

\end{proof}

\begin{rem}\upshape It follows from the proof of Theorem \ref{renorming1} that for any $1<p\leqslant \infty$, $T_p(A)\leqslant \textbf{t}_p(A)$. 

It also follows that for any $1<p\leqslant \infty$ and Proposition \ref{related moduli}, there exists a constant $C_p\geqslant 1$ such that if $T_p(A)<1$ and $\|A\|<1 $, there exists an equivalent norm $|\cdot|$ on $Y$ such that $\frac{1}{2}B_Y\subset B_Y^{|\cdot|}\subset 2 B_Y$ and such that $\textbf{t}_p(A:X\to (Y, |\cdot|))\leqslant C_p$. 

\label{useful remark}

\end{rem}

\section{An aside on games}

In this section, we discuss the definition of $C$-$\ell_p$ upper tree estimates and its implications. For technical reasons, it is much easier to prove the results of the previous section having the definition of $C$-$\ell_p$ upper tree estimates in terms of big and inevitable sets. Upper tree estimates are usually given in the form of ``every weakly null tree has a branch with a given property,'' while our definition is that for every normally weakly null tree, ``almost all'' branches have this given property, where the notion of ``almost all'' is comes from our definition of ``big.'' However, this presentation perhaps makes the notion less clear. The purpose of this section is to provide some intuition behind the definition by framing it in terms of winning strategies in two player games, and to show that these two presentations coincide.  We also prove a dichotomy in this section which will be necessary for the final section, as well as providing the promised proof that the definition of $C$-$\ell_p$ upper tree estimates is independent of the choice of weak neighborhood basis $D$. 

The first result of this section is essentially the result of Gale and Stewart \cite{GS} that open games are determined. However, since our proof is short, elucidative, and deals with games on $D$ rather than $\nn$, we include it.

Let $D$ be a directed set.   We define a two player game on $D$. Let us say $B\subset D^{<\nn}$ is \emph{downward closed} if $s,t\in D^{<\nn}$ and $s\leqslant t\in B$, then $s\in B$.  Let $B\subset D^{<\nn}$ be a downward closed subset of $D^{<\nn}$  and let $$\partial B=\{(v_i)_{i=1}^\infty \in D^\nn: (\forall n\in \nn)((v_i)_{i=1}^n\in B)\}.$$    Player I chooses $u_1\in D$ and Player II chooses $v_1\in D$ with $u_1\leqslant v_1$.  Player I chooses $u_2\in D$ and Player II chooses $v_2\in D$ with $u_2\leqslant v_2$. The two players continue in this way until a sequence $(u_i, v_i)_{i=1}^\infty\in (D\times D)^\nn$ is chosen with $u_i\leqslant v_i$ for all $i\in \nn$.  Player I wins if $(v_i)_{i=1}^\infty \in \partial B$, and Player II wins if $(v_i)_{i=1}^\infty \in G:=D^\nn\setminus \partial B$.  We refer to this as the $B$ game.    Note that if $D$ is endowed with the discrete topology and $D^\nn$ is endowed with the product topology, $G$ is open with respect to this topology, and $\partial B$ is closed. 

Let $$S_1=\{\varnothing\}\cup D^{<\nn}$$ and let $$S_2=\{((u_i, v_i)_{i=1}^{n-1}, u): n\in \nn, u_i, v_i, u\in D, u_i\leqslant v_i\}.$$  A \emph{strategy for Player I} is a function $\phi:S_1\to D$. A \emph{strategy for Player II} is a function $\psi:S_2\to D$ such that for every $(t, u)\in S_2$, $u\leqslant \psi(t, u)$.

Fix a strategy $\psi:S_2\to D$ for Player II. We say a sequence $t=(u_i,v_i)_{i=1}^\infty\in (D\times D)^{<\nn}$ is $\psi$-\emph{admissible} provided that for each $j\in \nn$, $v_j=\psi(t|_{j-1}, u_j)$. We say $\psi$ is a \emph{winning strategy for Player II in the} $B$ \emph{game} provided that for any $\psi$-admissible sequence $(u_i, v_i)_{i=1}^\infty$, $(v_i)_{i=1}^\infty\in G$. 

Fix a strategy $\phi:S_1\to D$ for Player I.  We say $(v_i)_{i=1}^n\in D^{<\nn}$ is $\phi$-\emph{admissible} provided that $\phi((v_i)_{i=1}^{j-1})\leqslant v_j$ for every $1\leqslant j\leqslant n$. We say $(v_i)_{i=1}^\infty \in D^\nn$ is $\phi$-admissible if $(v_i)_{i=1}^n$ is $\phi$-admissible for all $n\in \nn$.  We say $\phi$ is a \emph{winning strategy for Player I in the} $B$ \emph{game} provided that any $\phi$-admissible sequence $(v_i)_{i=1}^\infty$ lies in $\partial B$. We say $\phi$ is a \emph{defensive strategy for Player II in the} $B$ \emph{game} provided that for any $n\in \nn$ and any $\phi$-admissible sequence $(v_i)_{i=1}^n$, Player II does not have a winning strategy in the $B_{(v_i)_{i=1}^n}$ game. Here, $$B_{(v_i)_{i=1}^n}=\{s\in D^{<\nn}: (v_i)_{i=1}^n\smallfrown s\in B\}.$$

\begin{lemma}Fix a downward closed subset $B$ of $D^{<\nn}$. \begin{enumerate}[(i)]\item Any defensive strategy for Player I in the $B$ game is a winning strategy for Player I in the $B$ game. \item Either Player II has a winning strategy in the $B$ game or Player I has a defensive strategy in the $B$ game.

\end{enumerate} 

In particular, the $B$ game is determined. 

\label{determined}
\end{lemma}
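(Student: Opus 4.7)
Proof plan. The plan is to handle the two parts separately: (i) is a quick contradiction using the trivial Player II strategy, while (ii) is a level-by-level construction of a defensive strategy $\phi$ via a one-step reduction. The final determinacy assertion is then immediate from (i) and (ii).

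For (i), I would fix a defensive strategy $\phi$ for Player I and, supposing toward contradiction that some $\phi$-admissible $(v_i)_{i=1}^\infty$ fails to lie in $\partial B$, pick the least $n\geqslant 1$ with $(v_i)_{i=1}^n\notin B$. Since $B$ is downward closed, $B_{(v_i)_{i=1}^n}$ must be empty, so $\partial B_{(v_i)_{i=1}^n}=\varnothing$, which forces every choice of strategy to be winning for Player II in the $B_{(v_i)_{i=1}^n}$ game (concretely, take $\psi(t,u):=u$). But $(v_i)_{i=1}^n$ is $\phi$-admissible, so this contradicts the defensiveness of $\phi$.

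For (ii), assume Player II has no winning strategy in the $B$ game. The key step is the following one-move reduction: whenever Player II has no winning strategy in $B_s$ (for $s=\varnothing$, setting $B_\varnothing:=B$, or $s\in D^{<\nn}$), there exists $u\in D$ such that for every $v\geqslant u$, Player II has no winning strategy in $B_{s\smallfrown(v)}$. If this failed, the set $E_s:=\{v\in D:\text{Player II has a winning strategy in }B_{s\smallfrown(v)}\}$ would be cofinal in $D$; invoking the axiom of choice I would select for each $u\in D$ some $v(u)\in E_s$ with $v(u)\geqslant u$ together with a winning strategy $\psi_{v(u)}$ for Player II in $B_{s\smallfrown(v(u))}$. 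Responding to Player I's opening move $u_1$ by $v(u_1)$ and then running $\psi_{v(u_1)}$ would give a winning strategy for Player II in $B_s$, contradicting the hypothesis. Iterating this reduction defines $\phi$ on $\phi$-admissible sequences one level at a time (and arbitrarily elsewhere), and preserves the invariant that Player II has no winning strategy in $B_{(v_i)_{i=1}^n}$ for every $\phi$-admissible $(v_i)_{i=1}^n$, so the resulting $\phi$ is defensive by construction.

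The ``in particular'' clause is then immediate: if Player II has no winning strategy, (ii) supplies a defensive strategy for Player I, which (i) upgrades to a winning one. I expect the main obstacle to be the one-step reduction in (ii); once one sees that the failure of the claim assembles Player II's local winning strategies (via choice) into a single winning strategy for $B_s$, the remaining construction is routine bookkeeping.
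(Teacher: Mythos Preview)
Your proposal is correct and follows essentially the same approach as the paper: part (i) via the trivial Player II strategy $\psi(t,u)=u$ when $B_{(v_i)_{i=1}^n}=\varnothing$, and part (ii) via the one-step reduction that assembles cofinally-available local winning strategies for Player II into a global one, iterated to build $\phi$. The paper's proof differs only in presentation (it does not single out the least $n$ in (i), and it writes out the glued strategy $\psi$ in (ii) a bit more explicitly), but the argument is the same.
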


\begin{proof}$(i)$ Suppose $\phi$ is a defensive strategy for Player I in the $B$ game. Suppose $(v_i)_{i=1}^\infty$ is $\phi$-admissible. Then $(v_i)_{i=1}^n\in B$ for all $n\in \nn$.  Indeed, suppose there exists some $n$ such that $(v_i)_{i=1}^n\in D^{<\nn}\setminus B$. Note that Player II has a strategy in the $B_{(v_i)_{i=1}^n}$ game (indeed, one can take $\psi(t, u)=u$ for any $t$). This strategy is a winning strategy for Player II in the $B_{(v_i)_{i=1}^n}$ game, since $B_{(v_i)_{i=1}^n}=\varnothing$. This contradicts the definition of a defensive strategy, whence we deduce that $(v_i)_{i=1}^n\in B$ for all $n\in \nn$.  Thus $(v_i)_{i=1}^\infty \in \partial B$, and $\phi$ is a winning strategy for Player I in the $B$ game.

$(ii)$ Assume that Player II has no winning strategy in the $B$ game. Then it must be the case that there exists $u\in D$ such that for any $u\leqslant v\in D$, Player II does not have a winning strategy in the $B_{(v)}$ game. Indeed, if for every $u\in D$, there exists $v_u\in D$ with $u\leqslant v_u$ such that Player II has a winning strategy $\psi_u$ in the $B_{(v_u)}$ game, then we can easily construct a winning strategy for Player II in the $B$ game. We define $\psi(\varnothing, u)=v_u$, $\psi((u, v_u)\smallfrown t, u')=\psi_u(t, u')$, and for any $(u,v)\smallfrown t$ such that $v\neq v_u$, define $\psi((u, v)\smallfrown t, u')=u'$. It is easy to see that this is a winning strategy for Player II, contradicting our initial assumption. Therefore there does exist some $u\in D$ with the aforementioned property, and we define $\phi(\varnothing)$ to be this $u\in D$. 

Now assume that for some $(v_i)_{i=1}^n\in D^{<\nn}$ and every $0\leqslant j<n$, $\phi((v_i)_{i=1}^j)$ has been defined. Assume also that if $(v_i)_{i=1}^n$ is such that $\phi((v_i)_{i=1}^{j-1})\leqslant v_j$ for each $1\leqslant j<n$, Player II has no winning strategy in the $B_{(v_i)_{i=1}^n}$ game. If $(v_i)_{i=1}^n$ is such that $\phi((v_i)_{i=1}^{j-1})\leqslant v_j$ for each $1\leqslant j<n$, then we may argue as in the previous paragraph to deduce the existence of some $u\in D$ such that for any $u\leqslant v_{n+1}\in D$, Player II does not have a winning strategy in the $B_{(v_i)_{i=1}^{n+1}}$ game. In this case, we define $\phi((v_i)_{i=1}^n)=u$. If $(v_i)_{i=1}^n$ does not have the property that $\phi((v_i)_{i=1}^{j-1})\leqslant v_j$ for each $1\leqslant j<n$, let $\phi((v_i)_{i=1}^n)$ be any member of $D$. This completes the recursive definition of $\phi$, which is easily seen to be a defensive strategy for Player I.

\end{proof}

\begin{proposition} Let $B\subset D^{<\nn}$ be downward closed. Player I has a winning strategy in the $B$ game if and only if $B$ is big. 

\label{strategy}

\end{proposition}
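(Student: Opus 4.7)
The plan is to prove both directions by a direct translation between the recursive definition of an inevitable set and the recursive construction of a winning strategy for Player I.

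For the ``if'' direction, assume $B$ contains an inevitable subset $B'$. I would build $\phi$ by recursion on the length of plays. Since $D^1 \cap B'$ is eventual, there exists $u^* \in D$ such that every $v \geqslant u^*$ satisfies $(v) \in B'$; set $\phi(\varnothing) := u^*$. Inductively, suppose $\phi$ has been defined on all partial plays of length $<n$ in such a way that every $\phi$-admissible $(v_i)_{i=1}^{n}$ lies in $B'$. For each such $(v_i)_{i=1}^n$, use inevitability of $B'$ at $t = (v_i)_{i=1}^n \in B'$ to obtain a $u \in D$ such that $v \geqslant u$ implies $(v_i)_{i=1}^n \smallfrown (v) \in B'$, and set $\phi((v_i)_{i=1}^n) := u$; for partial plays that are not $\phi$-admissible, define $\phi$ arbitrarily. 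Then every $\phi$-admissible infinite play $(v_i)_{i=1}^\infty$ has all initial segments in $B' \subset B$, so $(v_i)_{i=1}^\infty \in \partial B$, and $\phi$ is a winning strategy.

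For the ``only if'' direction, suppose $\phi$ is a winning strategy for Player I and let
$$B' := \{t \in D^{<\nn} : t \text{ is } \phi\text{-admissible}\}.$$
I would verify two things. First, $B' \subset B$: given $t = (v_i)_{i=1}^n \in B'$, extend by $v_{n+j} := \phi((v_i)_{i=1}^{n+j-1})$ for $j \geqslant 1$ (always legal since $\phi((v_i)_{i=1}^{n+j-1}) \leqslant \phi((v_i)_{i=1}^{n+j-1})$), producing a $\phi$-admissible infinite sequence lying in $\partial B$; since $B$ is downward closed, $t \in B$. Second, $B'$ is inevitable: $D^1 \cap B' = \{v \in D : v \geqslant \phi(\varnothing)\}$ is eventual by definition of eventual, and for any $t \in B' \cap D^n$, the set of one-step extensions in $B'$ is exactly $\{t \smallfrown (v) : v \geqslant \phi(t)\}$, which is eventual (under the identification of $\{s \in D^{n+1} : t < s\}$ with $D$ via the last coordinate).

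The argument is essentially bookkeeping, and the only potential pitfall is making sure that ``eventual'' in the definition of inevitable is unpacked correctly — namely, that the condition on extensions of $t$ translates precisely to the existence of some threshold $u \in D$ above which every $v$ keeps the sequence in $B'$. Once this correspondence between ``eventual'' and ``$v \geqslant u$ suffices'' is made explicit, the construction of $\phi$ from $B'$ and the extraction of $B'$ from $\phi$ are mirror images of each other, and no additional technical machinery is needed.
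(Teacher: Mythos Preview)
Your proposal is correct and follows essentially the same approach as the paper: build $\phi$ recursively from an inevitable subset using the unpacked definition of ``eventual'' for the ``if'' direction, and take $B'$ to be the set of all $\phi$-admissible finite sequences for the ``only if'' direction, extending any such sequence canonically to verify $B'\subset B$. One cosmetic remark: in the ``only if'' direction you invoke downward closure to conclude $t\in B$, but in fact membership of the infinite extension in $\partial B$ already gives $(v_i)_{i=1}^n\in B$ for every $n$ directly from the definition of $\partial B$, so downward closure is not needed at that step.
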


\begin{proof}$(i)$ Assume $B$ is big and let $I$ be an inevitable subset of $B$. Since $B\cap D^1$ is eventual, there exists $\phi(\varnothing)\in D^1$ such that for any $\phi(\varnothing)\leqslant v$, $(v)\in I$. Now assume that for some $(v_i)_{i=1}^n\in S_1$,  $\phi((v_i)_{i=1}^m)$ has been defined for each $0\leqslant m<n$. Assume also that if $\phi((v_i)_{i=1}^{m-1})\leqslant v_m$ for each $1\leqslant m<n$, then $(v_i)_{i=1}^n\in I$.  In the case that $\phi((v_i)_{i=1}^{m-1})\leqslant v_m$ for each $1\leqslant m<n$, since $(v_i)_{i=1}^n\in I$, there exists $\phi((v_i)_{i=1}^n)\in D$ such that for any $\phi((v_i)_{i=1}^n)\leqslant v_{n+1}\in D$, $(v_i)_{i=1}^{n+1}\in D$. If $(v_i)_{i=1}^n$ does not have the property that $\phi((v_i)_{i=1}^{m-1})\leqslant v_m$ for each $1\leqslant m<n$, let $\phi((v_i)_{i=1}^n)$ be any member of $D$.  This completes the definition of $\phi$, which is easily seen to be a winning strategy for Player I in the $B$ game.

$(ii)$ Assume that $\phi$ is a winning strategy for Player I in the $B$ game. Let $I$ denote the set of all finite, $\phi$-admissible sequences. Then $I\subset D^{<\nn}$ is inevitable and contained in $B$. It is immediate from the definition that $I$ is inevitable. To see that $I$ is contained in $B$, fix $(v_i)_{i=1}^n\in I$. We recursively define $v_{n+1}, v_{n+2}, \ldots$ by $v_{n+k}=\phi((v_i)_{i=1}^{n+k-1})$. Then $(v_i)_{i=1}^\infty$ is $\phi$-admissible, and therefore $(v_i)_{i=1}^n\in B$.

\end{proof}

\begin{corollary} For $1<p<\infty$, $C>0$, and an operator $A:X\to Y$, $A$ fails to satisfy $C$-$\ell_p$ upper tree estimates if and only if there exists a normally weakly null $(x_t)_{t\in D^{<\nn}}\subset B_X$ such that for every $t=(v_i)_{i=1}^\infty \in D^\nn$, there exist $n\in \nn$ and $(a_i)_{i=1}^n\in B_{\ell_p^n}$ such that $$\|A\sum_{i=1}^n a_i x_{t|_i}\|>C.$$  

The analogous result holds for $C$-$c_0$ upper tree estimates. 

\label{dichotomy}

\end{corollary}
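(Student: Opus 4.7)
The plan is to reduce this corollary to the game-theoretic dichotomy established in Lemma~\ref{determined} and Proposition~\ref{strategy}. For a normally weakly null collection $(x_t)_{t\in D^{<\nn}}\subset B_X$, write
$$B(x_t):=\bigcup_{n=1}^\infty \Bigl\{t\in D^n: (\forall (a_i)_{i=1}^n\in B_{\ell_p^n})\bigl(\|A\sum_{i=1}^n a_i x_{t|_i}\|\leqslant C\bigr)\Bigr\},$$
which by homogeneity equals the set appearing in the definition of $C$-$\ell_p$ upper tree estimates. I would first record the immediate fact that $B(x_t)$ is downward closed, since extending $(a_i)$ by zeros preserves both the $\ell_p$-norm and the inequality; in particular, the $B(x_t)$ game from this section is well-defined. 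For the easy direction, if the tree condition in the statement holds for some $(x_t)$, then $\partial B(x_t)=\varnothing$; were $B(x_t)$ big, Proposition~\ref{strategy} would give Player~I a winning strategy $\phi$, and any $\phi$-admissible branch would lie in $\partial B(x_t)$, a contradiction. Hence $B(x_t)$ is not big and $A$ fails to satisfy $C$-$\ell_p$ upper tree estimates.

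For the substantive converse, the goal is to convert a winning strategy for Player~II into a tree where \emph{every} branch — not merely the $\psi$-admissible ones — witnesses the failure. Assuming $A$ fails $C$-$\ell_p$ upper tree estimates, fix $(x_t)$ for which $B(x_t)$ is not big; Lemma~\ref{determined} together with Proposition~\ref{strategy} then produces a winning strategy $\psi$ for Player~II in the $B(x_t)$ game. I would reparametrize by defining
$$y_{(v_1,\dots,v_n)}:=x_{(w_1,\dots,w_n)},\qquad w_j:=\psi\bigl((v_i,w_i)_{i=1}^{j-1},\,v_j\bigr)\quad(j=1,\dots,n),$$
with the $w_j$ computed inductively in $j$. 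Because $v_j\leqslant w_j$ in the reverse-inclusion order on $D$, one has $w_j\subseteq v_j$, so $x_{(w_1,\dots,w_n)}\in w_n\subseteq v_n$ and $(y_t)$ is again normally weakly null in $B_X$. Given any $t=(v_i)_{i=1}^\infty\in D^\nn$, the derived sequence $(w_i)_{i=1}^\infty$ is $\psi$-admissible and hence lies outside $\partial B(x_t)$; choosing the least $n$ with $(w_1,\dots,w_n)\notin B(x_t)$ and a witnessing $(a_i)_{i=1}^n\in B_{\ell_p^n}$ yields $\|A\sum_{i=1}^n a_i y_{t|_i}\|=\|A\sum_{i=1}^n a_i x_{(w_1,\dots,w_i)}\|>C$, exactly as required by the corollary.

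The main obstacle is the bookkeeping of the reindexing: everything must be arranged so that arbitrary branches of the new tree $(y_t)$ correspond to $\psi$-admissible branches of the old tree $(x_t)$, while simultaneously preserving normal weak nullity under the reverse-inclusion ordering on $D$. Once this is set up correctly, the remaining verifications are routine. The $C$-$c_0$ statement follows by the same argument with $B_{\ell_p^n}$ replaced by $B_{\ell_\infty^n}$ throughout.
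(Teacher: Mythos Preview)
Your proposal is correct and follows essentially the same approach as the paper: both directions use Lemma~\ref{determined} and Proposition~\ref{strategy}, and your reparametrization $y_{(v_1,\dots,v_n)}=x_{(w_1,\dots,w_n)}$ with $w_j=\psi((v_i,w_i)_{i=1}^{j-1},v_j)$ is exactly the paper's map $\theta$ (written there via the auxiliary $\Theta$), with the same verification that normal weak nullity is preserved because $w_j\subseteq v_j$.
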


\begin{proof} If $B\subset D^{<\nn}$ is big, then there exists $(v_i)_{i=1}^\infty$ such that $(v_i)_{i=1}^n\in B$ for all $n\in \nn$. Assume there exists $(x_t)_{t\in D^{<\nn}}\subset B_X$ normally weakly null such that for every $t=(v_i)_{i=1}^\infty\in D^\nn$, there exist $n\in \nn$ and $(a_i)_{i=1}^n\in B_{\ell_p^n}$ such that $$\|A\sum_{i=1}^n a_i x_{t|_i}\|>C.$$  From this and the comment at the beginning of the proof, it follows that $$B:=\bigcup_{n=1}^\infty \Bigl\{t\in D^n: (\forall (a_i)_{i=1}^n\in \mathbb{K}^n)\Bigl(\|A\sum_{i=1}^n a_i x_{t|_i}\|\leqslant C\|(a_i)_{i=1}^n\|_{\ell_p^n}\Bigr)\Bigr\}$$ cannot be big. Thus $A$ fails to have $C$-$\ell_p$ upper tree estimates.

Now assume that $A$ fails to have $C$-$\ell_p$ upper tree estimates.  This means there exists $(x'_t)_{t\in D^{<\nn}}\subset B_X$ normally weakly null such that $$B:=\bigcup_{n=1}^\infty \Bigl\{t\in D^n: (\forall (a_i)_{i=1}^n\in B_{\ell_p^n})\Bigl(\|A\sum_{i=1}^n a_i x'_{t|_i}\|\leqslant C\Bigr)\Bigr\}$$ is not big.  This means Player II has a winning strategy $\psi$ in the $B$ game.  Define maps $\theta:D^{<\nn}\to D^{<\nn}$ and $\Theta:D^{<\nn}\to (D\times D)^{<\nn}$ by letting $$\theta((u))= (\psi(\varnothing, u)),\hspace{10mm} \Theta((u))=(u, \theta((u))),$$ and if $\Theta(t)$ has been defined, let $$\theta(t\smallfrown (u))= \theta(t)\smallfrown (\psi(\Theta(t), u)),\hspace{10mm} \Theta(t\smallfrown (u))= \Theta(t)\smallfrown (u, \psi(\Theta(t), u)).$$   Then let $x_t= x'_{\theta(t)}$.  It follows from the definitions that $(x_t)_{t\in D^{<\nn}}$ is the collection we seek.

Replacing $\ell_p^n$ with $\ell_\infty^n$ gives the analogous result for $c_0$.

\end{proof}

Let $X$ be a Banach space and let $P$ be a property which finite sequences in $X$ may or may not possess. For example, once an operator $A:X\to Y$ and constants $1<p\leqslant \infty$ and $C>0$ are fixed,  $P$ may be the property that for all $(a_i)_{i=1}^n\in \mathbb{K}^n$, $$\|A\sum_{i=1}^n a_i x_i\|\leqslant C\|(a_i)_{i=1}^n\|_{\ell_p^n}.$$   We will then agree that an infinite sequence $(x_i)_{i=1}^\infty $ has property $P$ if $(x_i)_{i=1}^n$ has property $P$ for every $n\in \nn$.    What we have shown above easily generalizes to the following. 

\begin{corollary} Let $X$ be a Banach space and let $P$ be a property as above. Let $D$ be a fixed weak neighborhood basis at $0$ in $X$. Then exactly one of the two following alternatives hold: 

\begin{enumerate}[(i)]\item For every normally weakly null $(x_t)_{t\in D^{<\nn}}\subset B_X$, the set $$B:=\bigcup_{n=1}^\infty \Bigl\{t\in D^n: (x_{t|_i})_{i=1}^n \text{\ has\ }P\Bigr\}$$ is big.  \item There exists  a normally weakly null collection $(x_t)_{t\in D^{<\nn}}\subset B_X$ such that for every $t\in D^\nn$,  $(x_{t|_i})_{i=1}^\infty$ fails to have $P$. \end{enumerate}

\label{generalize}
\end{corollary}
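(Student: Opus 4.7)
The plan is to mirror the argument in Corollary \ref{dichotomy} almost verbatim, abstracting the specific $\ell_p$ estimate to the property $P$. First I would check that the two alternatives are mutually exclusive: if (ii) provides a normally weakly null tree $(x_t)$ and (i) applies to the same tree, then the associated set $B$ is big and therefore contains an inevitable subset $I$. Picking $v_1 \in I \cap D^1$ (nonempty because $I \cap D^1$ is eventual) and recursively choosing $v_{n+1}$ in the eventual set $\{s \in D^{n+1} : (v_i)_{i=1}^n < s\} \cap I$ produces a branch $t := (v_i)_{i=1}^\infty \in D^\nn$ all of whose finite initial segments lie in $I \subseteq B$. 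Under the stated convention, $(x_{t|_i})_{i=1}^\infty$ therefore has $P$, contradicting (ii).

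The substantive direction is that the failure of (i) forces (ii). Suppose $(x'_t)_{t \in D^{<\nn}} \subseteq B_X$ is normally weakly null and the associated set $B$ is not big. To apply the determinacy machinery of Section 3, which requires a downward-closed set, I would replace $B$ by its hereditary version $\tilde B := \{(v_i)_{i=1}^n \in D^{<\nn} : (x'_{v_i})_{i=1}^k \text{ has } P \text{ for every } 1 \leqslant k \leqslant n\}$. This $\tilde B$ is downward closed and satisfies $\tilde B \subseteq B$, so any inevitable subset of $\tilde B$ is also an inevitable subset of $B$; since $B$ is not big, neither is $\tilde B$. Lemma \ref{determined} and Proposition \ref{strategy} then yield a winning strategy $\psi$ for Player II in the $\tilde B$ game.

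Finally, I would build the witness tree for (ii) by composing $(x'_t)$ with $\psi$ as in the proof of Corollary \ref{dichotomy}: define $\theta : D^{<\nn} \to D^{<\nn}$ and $\Theta : D^{<\nn} \to (D \times D)^{<\nn}$ by $\theta((u)) = (\psi(\varnothing, u))$, $\Theta((u)) = (u, \psi(\varnothing, u))$, and recursively $\theta(t \smallfrown (u)) = \theta(t) \smallfrown (\psi(\Theta(t), u))$ and $\Theta(t \smallfrown (u)) = \Theta(t) \smallfrown (u, \psi(\Theta(t), u))$, then set $x_t := x'_{\theta(t)}$. Since Player II's response to any $u$ satisfies $u \leqslant \psi(\cdot, u)$, i.e., the chosen weak neighborhood is contained in $u$, the collection $(x_t)$ inherits normality and boundedness from $(x'_t)$. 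For any $t = (u_i) \in D^\nn$, the sequence $\theta(t) = (v_i)$ is $\psi$-admissible and hence lies outside $\partial \tilde B$, so some $k$ witnesses that $(x'_{v_i})_{i=1}^k$ fails $P$; equivalently $(x_{t|_i})_{i=1}^\infty$ fails $P$, as required. The only subtlety beyond Corollary \ref{dichotomy} is the reduction from $B$ to the downward-closed $\tilde B$; everything else is a verbatim generalization.
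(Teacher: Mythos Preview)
Your proposal is correct and follows exactly the approach the paper intends: the paper offers no separate proof of Corollary \ref{generalize}, merely asserting that ``what we have shown above easily generalizes,'' and you carry out precisely that generalization of the argument for Corollary \ref{dichotomy}. Your observation that one must pass from $B$ to its downward-closed core $\tilde B$ before invoking Lemma \ref{determined} and Proposition \ref{strategy} is a genuine point the paper glosses over (in the $\ell_p$ case of Corollary \ref{dichotomy} the set $B$ is automatically downward closed, but for an arbitrary property $P$ it need not be), and your handling of it is correct.
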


Now if $(x_t)_{t\in D^{<\nn}}$ is as in $(ii)$ of Corollary \ref{generalize} and if $D_1$ is any other weak neighborhood basis at $0$ in $X$, we may fix $\phi:D_1\to D$ such that $\phi(u)\subset u$ for each $u\in D_1$ and define $\Phi((u_i)_{i=1}^n)=(\phi(u_i))_{i=1}^n$. Then with $x'_t=x_{\Phi(t)}$, $(x'_t)_{t\in D_1^{<\nn}}\subset B_X$ is also as in $(ii)$ with $D$ replaced by $D_1$. Thus the alternatives in Corollary \ref{generalize} are independent of the particular choice of $D$. In particular, we have the following.

\begin{corollary} The definition of $C$-$\ell_p$ (resp. $C$-$c_0$) upper tree estimates is independent of the weak neighborhood basis $D$. 
\label{shortcut}
\end{corollary}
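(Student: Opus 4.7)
The plan is to derive Corollary \ref{shortcut} as a direct consequence of the dichotomy in Corollary \ref{generalize}, by showing that the second alternative in that dichotomy is insensitive to the choice of weak neighborhood basis. Since satisfying or failing $C$-$\ell_p$ (resp.\ $C$-$c_0$) upper tree estimates is precisely alternative (i) (resp.\ alternative (ii)) of Corollary \ref{generalize} for the property $P$ defined by $\|A\sum_{i=1}^n a_i x_i\|\leqslant C\|(a_i)_{i=1}^n\|_{\ell_p^n}$ (resp.\ the $\ell_\infty^n$ version), this immediately yields the corollary.

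First, I would fix two weak neighborhood bases $D$ and $D_1$ at $0$ in $X$ and assume that $A$ fails $C$-$\ell_p$ upper tree estimates with respect to $D$. By Corollary \ref{generalize}, there is a normally weakly null collection $(x_t)_{t\in D^{<\nn}}\subset B_X$ with the property that for every $t\in D^\nn$, the infinite branch $(x_{t|_i})_{i=1}^\infty$ fails $P$. To transfer this to $D_1$, I would use the fact that both $D$ and $D_1$ are neighborhood bases at $0$ to choose, for each $u\in D_1$, an element $\phi(u)\in D$ with $\phi(u)\subset u$, then extend coordinatewise to $\Phi:D_1^{<\nn}\to D^{<\nn}$ via $\Phi((u_i)_{i=1}^n)=(\phi(u_i))_{i=1}^n$ and set $x'_t=x_{\Phi(t)}$.

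It is then a routine verification that $(x'_t)_{t\in D_1^{<\nn}}$ is normally weakly null with respect to $D_1$: for $t=(u_i)_{i=1}^n\in D_1^{<\nn}$, we have $x'_t=x_{\Phi(t)}\in \phi(u_n)\subset u_n$, using that $(x_s)_{s\in D^{<\nn}}$ is normally weakly null with respect to $D$. Moreover, $\Phi$ respects initial segments, so that $\Phi(t|_i)=\Phi(t)|_i$, and consequently every branch $(x'_{t|_i})_{i=1}^\infty$ in $D_1^{<\nn}$ coincides with a branch in $D^{<\nn}$ that fails $P$. Hence $(x'_t)_{t\in D_1^{<\nn}}$ witnesses alternative (ii) of Corollary \ref{generalize} with respect to $D_1$, so $A$ fails $C$-$\ell_p$ upper tree estimates with respect to $D_1$ as well. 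By symmetry of $D$ and $D_1$, the two definitions coincide. The $c_0$ case is identical, replacing $\ell_p^n$ with $\ell_\infty^n$ in the definition of $P$. There is no real obstacle here: once Corollary \ref{generalize} is in hand, the only content is the coordinatewise refinement map $\phi$, whose existence is immediate from the definition of a neighborhood basis.
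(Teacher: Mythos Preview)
Your proof is correct and follows essentially the same argument as the paper: both use Corollary \ref{generalize} to reduce the question to showing that alternative (ii) transfers between weak neighborhood bases via a coordinatewise refinement map $\phi:D_1\to D$ with $\phi(u)\subset u$. Your write-up actually supplies slightly more detail than the paper's (making explicit that $\Phi$ preserves initial segments and verifying the normally weakly null condition), but the idea is identical.
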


We last conclude 

\begin{corollary} For any operator $A:X\to Y$ and $C>0$, $A:X\to Y$ satisfies $C$-$\ell_p$ (resp. $C$-$c_0$) upper tree estimates if and only if every normally weakly null collection in $B_X$ has a branch $C$-dominated by the $\ell_p$ (resp. $c_0$) basis. 

\end{corollary}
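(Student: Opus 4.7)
The plan is to read this off directly from the dichotomy already established in Corollary \ref{dichotomy} (or its general form Corollary \ref{generalize}). The key is to choose $P$ correctly: for a finite sequence $(x_i)_{i=1}^n \subset X$, let $P$ be the property that $\|A\sum_{i=1}^n a_i x_i\| \leqslant C\|(a_i)_{i=1}^n\|_{\ell_p^n}$ for every $(a_i)_{i=1}^n \in \mathbb{K}^n$. Under the convention adopted at the end of Section 3, an infinite sequence has $P$ when every finite initial segment does, so $(x_{t|_i})_{i=1}^\infty$ has $P$ if and only if this branch is $C$-dominated by the $\ell_p$-basis.

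With this choice, alternative (i) of Corollary \ref{generalize} --- that for every normally weakly null $(x_t)_{t\in D^{<\nn}}\subset B_X$ the set
$$B = \bigcup_{n=1}^\infty \bigl\{t \in D^n : (x_{t|_i})_{i=1}^n \text{ has } P\bigr\}$$
is big --- is precisely the definition of $A$ satisfying $C$-$\ell_p$ upper tree estimates. Meanwhile, alternative (ii) --- that some normally weakly null collection has every branch failing $P$ --- is the negation of the statement that every normally weakly null collection has a branch $C$-dominated by the $\ell_p$-basis. Since Corollary \ref{generalize} asserts that exactly one of (i), (ii) holds, (i) holds if and only if (ii) fails, which is the desired equivalence.

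The $c_0$ case is entirely parallel, taking $P$ to be $\|A\sum_{i=1}^n a_i x_i\| \leqslant C\|(a_i)_{i=1}^n\|_{\ell_\infty^n}$ and invoking the $c_0$ half of the dichotomy. There is essentially no obstacle once the statement is matched against Corollary \ref{generalize}; the real work was already done in proving that dichotomy via the determinacy of the $B$-game in Lemma \ref{determined} and Proposition \ref{strategy}, and the reformulation here just records what Corollaries \ref{dichotomy} and \ref{generalize} say in the more familiar ``every weakly null tree has a branch'' language used elsewhere in the literature.
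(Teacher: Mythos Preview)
Your proof is correct and matches the paper's approach: the paper states this corollary without proof, treating it as an immediate consequence of the dichotomy in Corollary \ref{dichotomy}/\ref{generalize}, which is exactly what you have spelled out. The key identification you make---that alternative (i) is the definition of $C$-$\ell_p$ upper tree estimates and that the negation of (ii) is ``every normally weakly null collection has a $C$-dominated branch''---is precisely the intended reading.
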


\section{An observation on injective tensor products}

We discuss now some of the known results regarding asymptotically uniformly smooth norms.  We briefly recall the Szlenk index of an operator. Given $\ee>0$, a Banach space $X$, and $K\subset X^*$ weak$^*$-compact, we let $s_\ee(K)$ denote the set of those $x^*\in K$ such that for every weak$^*$-neighborhood $V$ of $x^*$, $\text{diam}(K\cap V)>\ee$.  We define the transfinite derivations by $s^0_\ee(K)=K$, $s^{\xi+1}_\ee(K)=s_\ee(s^\xi_\ee(K))$ for an ordinal $\xi$, and $s^\xi_\ee(K)=\cap_{\zeta<\xi}s^\zeta_\ee(K)$ when $\xi$ is a limit ordinal.   We let $Sz(K, \ee)$ be the minimum ordinal $\xi$  such that $s^\xi_\ee(K)=\varnothing$, provided such a $\xi$ exists, and we write $Sz(K,\ee)=\infty$ if  no such ordinal exists. Note that by weak$^*$-compactness, $Sz(K, \ee)$ cannot be a limit ordinal.

 Given an operator $A:X\to Y$, we let $Sz(A, \ee)=Sz(A^*B_{Y^*}, \ee)$. We let $Sz(A)=\sup_{\ee>0}Sz(A, \ee)$, with the agreement that $Sz(A)=\infty$ provided that there exists $\ee>0$ such that $Sz(A, \ee)=\infty$. Given a Banach space $X$, we let $Sz(X, \ee)=Sz(I_X, \ee)$ and $Sz(X)=Sz(I_X)$. If $Sz(A, \ee)<\omega$ for all $\ee>0$, we let $$\textbf{p}(A)=\underset{\ee\to 0^+}{\lim\sup} \frac{\log Sz(A, \ee)}{|\log(\ee)|}.$$ This quantity need not be finite for operators, although it is finite for Banach spaces.   If $Sz(A, \ee)> \omega$ for some $\ee>0$, we let $\textbf{p}(A)=\infty$.  Given a Banach space $X$, we let $\textbf{p}(X)=\textbf{p}(I_X)$.  We collect known results about these quantities. These results are collected from \cite{Br}, \cite{GKL1}, and \cite{C}. In what follows, $p'$ denotes the conjugate exponent to $p$.

\begin{theorem} Let $A:X\to Y$ be an operator. \begin{enumerate}[(i)]\item $A$ is asymptotically uniformly smoothable if and only if $Sz(A)\leqslant \omega$. \item If $Sz(A)<\omega$, then $A$ is compact and asymptotically uniformly flat. \item If $Sz(A)=\omega$, $A$ is $p$-asymptotically uniformly smoothable for some $1<p<\infty$ if and only if $\textbf{\emph{p}}(A)<\infty$, and in this case $\textbf{\emph{p}}(A)\geqslant 1$ and $$\sup\{p:A\text{\ is\ }p\text{-asymptotically uniformly smoothable}\}= \textbf{\emph{p}}(A)'.$$ \item If $A=I_X$, then $\textbf{\emph{p}}(X)<\infty$ if $Sz(X)\leqslant \omega$. \end{enumerate}

\label{pullman}

\end{theorem}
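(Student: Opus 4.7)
This theorem consolidates results of \cite{Br}, \cite{GKL1}, and \cite{C}, and my plan is to obtain each part in a unified way by routing it through the tree-estimate characterization supplied by Theorem \ref{renorming1}.

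The key link between Szlenk analysis and upper tree estimates is the following. Given a normally weakly null collection $(x_t)_{t\in D^{<\nn}}\subset B_X$ and scalars $(a_i)_{i=1}^n$, one picks a norming functional $y^*\in B_{Y^*}$ for $A\sum_{i=1}^n a_i x_{t|_i}$ and selects the branch $t$ so that $A^*y^*$ persists through successive $\ee$-Szlenk derivatives of $A^*B_{Y^*}$. A bound on $Sz(A,\ee)$ thereby limits how long such a branch can be sustained, which converts directly into $c_0$ upper tree estimates when the bound is merely finite, and into $\ell_p$ upper tree estimates when the bound is polynomial of the appropriate exponent.

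For the forward implications in $(i)$ and $(iii)$, the familiar control of $Sz(A,\ee)$ by the modulus $\rho(\cdot,A)$ from \cite{GKL1}, extended to operators in \cite{C}, shows that an AUS (respectively $p$-AUS) renorming forces $Sz(A)\leqslant \omega$ (respectively $\textbf{p}(A)\leqslant p'$). For the converse directions, the preceding paragraph supplies the required upper tree estimates from the Szlenk hypothesis, and Theorem \ref{renorming1} then produces the equivalent norm; in the $p$-AUS case, running this for every exponent $p<\textbf{p}(A)'$ yields the supremum formula. Part $(ii)$ I would handle by noting that $Sz(A)<\omega$ forces a single integer $N$ with $s_\ee^N(A^*B_{Y^*})=\varnothing$ for all $\ee>0$ below some threshold, which gives a uniform $c_0$ upper tree estimate and hence asymptotic uniform flatness via Theorem \ref{renorming1}$(i)$; the compactness of $A$ follows separately from the standard argument in \cite{Br}. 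Finally, $(iv)$ I would obtain from the submultiplicativity of $\ee\mapsto Sz(X,\ee)$ in \cite{GKL1}: once $Sz(X)\leqslant \omega$, each $Sz(X,\ee)$ is a positive integer, and submultiplicativity then forces polynomial growth of $Sz(X,\ee)$ in $1/\ee$, which is precisely $\textbf{p}(X)<\infty$. I expect the principal obstacle to be the quantitative polynomial bookkeeping that links $Sz(A,\ee)$ to the tree-estimate constants in the non-separable operator setting; this is the core technical content of \cite{C}.
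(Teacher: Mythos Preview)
The paper does not prove Theorem \ref{pullman} at all: it is stated explicitly as a compilation of known results, with the line ``These results are collected from \cite{Br}, \cite{GKL1}, and \cite{C}'' immediately preceding the statement, and no proof is given. So there is no ``paper's own proof'' to compare against; the theorem is invoked as background.

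Your plan to re-derive everything through Theorem \ref{renorming1} is therefore an alternative approach, and it has a genuine gap in part $(i)$. Theorem \ref{renorming1} only characterizes $p$-asymptotic uniform smoothability and asymptotic uniform flattenability, not bare asymptotic uniform smoothability. Part $(iii)$ of the very theorem you are trying to prove tells you that there exist operators with $Sz(A)=\omega$ but $\textbf{p}(A)=\infty$; such an operator is $p$-AUS-able for no $p$ and hence satisfies no $\ell_p$ (or $c_0$) upper tree estimates, yet by part $(i)$ it must still be AUS-able. Your mechanism---produce tree estimates from the Szlenk hypothesis, then invoke Theorem \ref{renorming1}---simply does not reach these operators, so the converse direction of $(i)$ cannot be obtained this way. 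The renorming needed for $(i)$ in full generality is a separate result (the operator/non-separable extension of the \cite{KOS}--\cite{GKL1} renorming, carried out in \cite{C}), and it does not factor through the tree-estimate theorem of the present paper.

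There is also a smaller issue in your handling of $(ii)$: you claim $Sz(A)<\omega$ gives uniform $c_0$ upper tree estimates and then asymptotic uniform flatness via Theorem \ref{renorming1}$(i)$, but Theorem \ref{renorming1}$(i)$ delivers only asymptotic uniform \emph{flattenability}, i.e.\ the property after some renorming. The statement of $(ii)$ asserts that $A$ is asymptotically uniformly flat in the given norm; this follows immediately once you know $A$ is compact (since then $\rho(\sigma,A)=0$ for all $\sigma$), so the right order is to establish compactness first and deduce flatness as a trivial consequence, rather than going through tree estimates.
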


We also collect some standard facts about duality between $\rho(\cdot, A)$ and $\delta^{\text{weak}^*}(\cdot, A)$.

\begin{proposition} Let $A:X\to Y$ be an operator. \begin{enumerate}[(i)]\item $A$ is asymptotically uniformly smooth if and only if $\delta^{\text{\emph{weak}}^*}(\tau, A)>0$ for all $\tau>0$. \item If $1<p,q<\infty$ and $1/p+1/q=1$, then $A$ is $p$-asymptotically uniformly smooth if and only if $\sup_{\sigma>0} \rho(\sigma, A)/\sigma^p<\infty$ if and only if there exists $\sigma_1>0$ such that $\sup_{0<\sigma<\sigma_1} \rho(\sigma, A)/\sigma^p<\infty$ if and only if $\inf_{0<\tau<1}\delta^{\text{\emph{weak}}^*}(\tau, A)/\tau^q>0$ if and only if for every $\tau_1>0$, $\delta^{\text{\emph{weak}}^*}(\tau, A)/\tau^q<\infty$. \item $A$ is asymptotically uniformly flat if and only if $\inf_{\tau>0} \delta^{\text{\emph{weak}}^*}(\tau, A)/\tau>0$. \end{enumerate}

\label{duality}

\end{proposition}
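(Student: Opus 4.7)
The plan is to establish the classical Young-type duality between $\rho(\cdot, A)$ and $\delta^{\text{weak}^*}(\cdot, A)$, and then read off the three equivalences as formal Legendre-transform consequences. The target key inequality is
$$\sigma\tau \;\leqslant\; \rho(\sigma, A) \;+\; \delta^{\text{weak}^*}(\tau, A)$$
for every $\sigma,\tau>0$, complemented by a matching lower bound so that the two moduli are conjugate in the Young sense (up to the usual convexity adjustments and the cosmetic choice of $y\in B_Y$ versus $y\in S_Y$ which the authors have already dismissed as immaterial).

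For the nontrivial direction of the inequality, I would start with approximate extremizers of $\delta^{\text{weak}^*}(\tau, A)$: a norming $y^*\in S_{Y^*}$ and a weak$^*$-null net $(y^*_\lambda)$ with $\|A^*y^*_\lambda\|\geqslant \tau-\eta$. Pick $x_\lambda\in B_X$ with $\langle A^* y^*_\lambda, x_\lambda\rangle \geqslant \tau-2\eta$. The net $(x_\lambda)$ need not be weakly null, so I would pass to a weak cluster point $x\in X$ and replace $x_\lambda$ by $x_\lambda-x$, absorbing the resulting loss using that $\langle A^*y^*_\lambda, x\rangle\to 0$ (because $y^*_\lambda\to 0$ weak$^*$). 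Evaluating $y^*+y^*_\lambda$ against $y^*$ normed by itself and against $A(\sigma x_\lambda)$ gives $\|y^*+A\sigma x_\lambda\|\geqslant 1+\sigma\tau - O(\eta)$ in the limit, which is the desired bound on $\rho(\sigma,A)$. The reverse inequality uses Hahn--Banach functionals $y^*_\lambda$ norming $y+Ax_\lambda$ for a weakly null $(x_\lambda)\subset\sigma B_X$: take a weak$^*$ cluster point $y^*$, write $y^*_\lambda = y^* + (y^*_\lambda - y^*)$ with the difference weak$^*$-null, and use $\|A^*(y^*_\lambda-y^*)\|\leqslant\tau$ in the definition of $\delta^{\text{weak}^*}$ to bound $\|y+Ax_\lambda\|-1$ by $\sigma\tau - \delta^{\text{weak}^*}(\tau, A)$ plus a vanishing error term from $y^*(Ax_\lambda)$.

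With the Young duality in hand the three equivalences are routine. For (i), asymptotic uniform smoothness is $\inf_\sigma \rho(\sigma,A)/\sigma = 0$, which means that for every $\tau>0$ there exists $\sigma>0$ with $\rho(\sigma,A)<\sigma\tau$, equivalently $\sup_\sigma(\sigma\tau-\rho(\sigma,A))>0$, equivalently $\delta^{\text{weak}^*}(\tau,A)>0$. For (ii), I would apply the standard Young inequality $\sigma\tau\leqslant\sigma^p/p + \tau^q/q$ together with the conjugation identity to convert a bound $\rho(\sigma,A)\leqslant C\sigma^p$ into $\delta^{\text{weak}^*}(\tau,A)\geqslant c\tau^q$ with $c = (Cp)^{-q/p}/q$, and vice versa; the various $\sup/\inf$ reformulations listed in (ii) are immediate from convexity and monotonicity of the two moduli together with the fact that $\rho$ is convex with $\rho(0,A)=0$. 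For (iii), $\rho(\sigma_0,A)=0$ for some $\sigma_0>0$ translates via the Young duality to $\sigma_0\tau\leqslant\delta^{\text{weak}^*}(\tau,A)$ for all $\tau>0$, i.e.\ $\inf_\tau\delta^{\text{weak}^*}(\tau,A)/\tau\geqslant\sigma_0$; conversely a linear lower bound on $\delta^{\text{weak}^*}$ forces $\rho(\sigma,A)=0$ on an initial interval by plugging back into the Young formula.

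The main obstacle is the extraction argument in the Young-type inequality: passing from the natural near-extremizer $x_\lambda$ (which gives the right pairing with $A^*y^*_\lambda$ but may have a nonzero weak cluster point) to a genuinely weakly null net without losing more than $O(\eta)$ in the pairing. This is delicate because the net is indexed by an arbitrary directed set rather than $\mathbb{N}$, so I cannot simply diagonalise; instead I would work with a weak$^*$--weak double limit and use that $y^*_\lambda\to 0$ weak$^*$ to force $\langle A^*y^*_\lambda,x\rangle\to 0$ for the fixed cluster point $x$. Once this technical step is handled, everything else is Legendre-transform bookkeeping.
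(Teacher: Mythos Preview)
Your overall strategy---establish a Young-type inequality between $\rho$ and $\delta^{\text{weak}^*}$ and read off (i)--(iii) as Legendre-transform consequences---is the standard one, and is essentially what the paper does (it cites \cite{CD} for (i) and (ii) and argues (iii) directly). However, your proposed handling of the ``main obstacle'' contains a genuine gap. You plan to pass to a \emph{weak} cluster point $x\in X$ of the bounded net $(x_\lambda)\subset B_X$ and replace $x_\lambda$ by $x_\lambda-x$. But bounded nets in $B_X$ need not have weak cluster points in $X$ unless $X$ is reflexive; Banach--Alaoglu only supplies a weak$^*$ cluster point $x^{**}\in B_{X^{**}}$. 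Once $x^{**}\notin X$, your key step ``$\langle A^*y^*_\lambda,x\rangle\to 0$ because $y^*_\lambda\to 0$ weak$^*$'' breaks down: the pairing becomes $\langle x^{**},A^*y^*_\lambda\rangle$, and making this vanish would require \emph{weak} (not weak$^*$) convergence of $A^*y^*_\lambda$ to zero, which you do not have. So the subtraction trick does not produce a weakly null net in the generality needed.

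The paper avoids this in (iii) by working from the other side in each direction. For $\rho(\sigma,A)=0\Rightarrow\inf_\tau\delta^{\text{weak}^*}(\tau,A)/\tau>0$, one uses that $(A^*y^*_\lambda)$ is itself weak$^*$-null in $X^*$ with norm $\geqslant\tau$; a short finite-dimensional argument then shows that its restriction to any finite-codimensional subspace of $X$ eventually has norm at least $\tau/2$, so after re-indexing one may choose each $x_\lambda$ inside a prescribed weak neighbourhood of $0$ (making the net weakly null \emph{by construction}) while keeping $\text{Re}\,y^*_\lambda(Ax_\lambda)\geqslant\sigma\tau/2$---this is the source of the factor $1/2$ in the paper's bound $\delta^{\text{weak}^*}(\tau,A)/\tau\geqslant\sigma/2$. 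For the converse, no extraction in $X$ is needed at all: one starts from $y$ and a weakly null $(x_\lambda)$ witnessing $\rho(\sigma,A)>0$, takes norming functionals $y^*_\lambda\in B_{Y^*}$, passes to a weak$^*$ cluster point $y^*\in B_{Y^*}$ (this time Banach--Alaoglu applies), and finishes via the quantitative estimate of Proposition~\ref{wasc} rather than a bare Young inequality.
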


Items $(i)$ and $(ii)$ were shown in \cite{CD}.  Since asymptotic uniform flatness for operators has not been previously defined, item $(iii)$ has not appeared in the literature, although the spatial analogue is well-known. We provide the short proof, which requires the following.

\begin{proposition}\cite{CD} Fix $\sigma, \tau>0$. If $A:X\to Y$ is an operator and $\delta^{\text{\emph{weak}}^*}(\tau, A)\geqslant \sigma\tau$, then for any $c>0$, any net $(y^*_\lambda)\subset B_{Y^*}$ converging weak$^*$ to $y^*$ such that $\underset{\lambda}{\lim\inf} \|A^*y^*_\lambda-A^*y^*\|= c\geqslant \tau$, then $\sigma c\leqslant 1$ and $\|y^*\|\leqslant 1-\sigma c$. 

\label{wasc}

\end{proposition}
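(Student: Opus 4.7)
The plan is to bound the same quantity $\liminf_\lambda \|u^* + s z^*_\lambda\|$ from above and below, using the modulus hypothesis on one side and the triangle inequality on the other. Set $z^*_\lambda = y^*_\lambda - y^*$, which is weak$^*$-null with $\liminf_\lambda \|A^* z^*_\lambda\| = c \geqslant \tau$. Choose $u^* \in S_{Y^*}$ equal to $y^*/\|y^*\|$ when $y^* \neq 0$ and arbitrary otherwise, so that $y^* = \|y^*\|u^*$ in either case. The parameter $s > \tau/c$ will be specified at the end and then sent down to $\tau/c$.

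For the lower bound, fix $c' \in (\tau/s, c)$. Since $\liminf_\lambda \|A^* z^*_\lambda\| = c > c'$, we have $\|A^* z^*_\lambda\| \geqslant c'$ on a tail of the net, whence $\|A^*(s z^*_\lambda)\| \geqslant sc' > \tau$ on that tail. The tail is itself a weak$^*$-null net, so applying the definition of $\delta^{\text{weak}^*}(\tau, A)$ to $u^*$ together with $(s z^*_\lambda)$ restricted to this tail yields
\[\liminf_\lambda \|u^* + s z^*_\lambda\| \geqslant 1 + \delta^{\text{weak}^*}(\tau, A) \geqslant 1 + \sigma\tau.\]
For the upper bound, rewrite $u^* + s z^*_\lambda = (1 - s\|y^*\|) u^* + s y^*_\lambda$ and use $\|y^*_\lambda\| \leqslant 1$ to get $\|u^* + s z^*_\lambda\| \leqslant |1 - s\|y^*\|| + s$. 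Combining the two bounds,
\[|1 - s\|y^*\|| \geqslant 1 + \sigma\tau - s.\]

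To finish, restrict to $s$ slightly larger than $\tau/c \leqslant 1$, so that the right side is positive. If one had $|1 - s\|y^*\|| = s\|y^*\| - 1$, then $s(1 + \|y^*\|) \geqslant 2 + \sigma\tau$, and letting $s \downarrow \tau/c$ would give $\|y^*\| \geqslant 2c/\tau + \sigma c - 1 \geqslant 1 + \sigma c > 1$, contradicting $\|y^*\| \leqslant \liminf_\lambda \|y^*_\lambda\| \leqslant 1$. Hence $1 - s\|y^*\| \geqslant 1 + \sigma\tau - s$, i.e., $s(1 - \|y^*\|) \geqslant \sigma\tau$. Letting $s \downarrow \tau/c$ yields $\|y^*\| \leqslant 1 - \sigma c$, and nonnegativity of the left side then forces $\sigma c \leqslant 1$.

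The main subtlety lies in the lower bound step: the definition of $\delta^{\text{weak}^*}(\tau, A)$ literally requires $\|A^* w^*_\lambda\| \geqslant \tau$ at \emph{every} index of the net, not merely in the liminf. The workaround above is to dilate by $s > \tau/c$ and restrict to the tail on which the bound holds uniformly, thereby legitimately invoking the definition, and then to recover the sharp constant by letting $s \downarrow \tau/c$ at the end. Everything else is algebraic manipulation, and the treatment of $y^* = 0$ versus $y^* \neq 0$ is unified by writing $y^* = \|y^*\| u^*$.
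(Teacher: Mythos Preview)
Your argument is correct and takes a genuinely different route from the paper's. The paper does not give a self-contained proof: it cites \cite[Proposition~3.10]{CD} for the inequality $\|y^*\|\leqslant 1-\sigma c$ in the case $y^*\neq 0$, and then handles $y^*=0$ separately by a perturbation trick---adding a small $y^*_0\in\delta S_{Y^*}$, rescaling by $(1+\delta)^{-1}$ to keep the net in $B_{Y^*}$, and invoking continuity of $\delta^{\text{weak}^*}(\cdot,A)$ so as to apply the already-cited nonzero case at nearby parameters $\sigma_1<\sigma$, $\tau_1<\tau$. Your approach, by contrast, unifies both cases via $y^*=\|y^*\|u^*$ with $u^*\in S_{Y^*}$ and a single two-sided estimate on $\|u^*+sz^*_\lambda\|$; it needs neither the external reference nor the continuity of the modulus, and the dilation parameter $s>\tau/c$ neatly handles the technicality that the definition of $\delta^{\text{weak}^*}(\tau,A)$ requires $\|A^*(\cdot)\|\geqslant\tau$ pointwise rather than only in the $\liminf$. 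One small expository point: your dichotomy on the sign of $1-s\|y^*\|$ is really a dichotomy along a sequence $s_n\downarrow\tau/c$ (either $s_n\|y^*\|\geqslant 1$ along a subsequence, yielding the contradiction you describe in the limit, or $s_n\|y^*\|<1$ eventually, giving the desired bound); the phrase ``letting $s\downarrow\tau/c$'' inside a fixed case is slightly loose, but the repair is routine and does not affect the substance.
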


\begin{proof} In \cite[Proposition $3.10$]{CD}, it was shown under these hypotheses that either $y^*=0$ or $\|y^*\|\leqslant \sigma c$.  Thus it remains to show that if $y^*=0$, then $\sigma c\leqslant 1$.   Under the hypotheses, it is clear that $A$ is not compact, whence $\delta^{\text{weak}^*}(\cdot, A)$ takes on finite values and is continuous. Fix any $0<\sigma_1<\sigma$ and fix $0<\tau_1<\tau$ such that $\delta^{\text{weak}^*}(\tau_1, A)\geqslant \sigma_1\tau_1$. Fix $0<\delta<1$ such that $(1+\delta)\tau_1<\tau$ and $y^*_0\in \delta S_{Y^*}$. Now let $z^*=(1+\delta)^{-1}y^*_0$, $z^*_\lambda= (1+\delta)^{-1} (y^*_0+y^*_\lambda)$.  Applying the result to $(z^*_\lambda)\subset B_{Y^*}$, $z^*$ with $\sigma, \tau$ replaced by $\sigma_1$, $\tau_1$, we deduce that $\sigma_1\bigl(\frac{c}{1+\delta})\leqslant 1$. Since $0<\sigma_1<\sigma$ and $0<\delta<1$ were arbitrary, we deduce that $\sigma c\leqslant 1$.

\end{proof}

\begin{proof}[Proof of Proposition \ref{duality}$(iii)$] If $A$ is the zero operator, there is nothing to show, so assume $A$ is not the zero operator.

Assume $\rho(\sigma, A)=0$. Fix $\tau>0$, $y^*\in S_{Y^*}$, and $(y^*_\lambda)\subset Y^*$ is weak$^*$-null and $\|A^*y^*_\lambda\|\geqslant \tau$.  Fix $\delta>0$ and $y\in S_Y$ such that $\text{Re\ }y^*(y)>1-\delta$. After passing to a subnet, we may fix $(x_\lambda)\subset \sigma B_X$ such that $\underset{\lambda}{\lim\inf}\text{Re\ }y^*_\lambda(Ax_\lambda)\geqslant \sigma\tau/2$, whence $$\underset{\lambda}{\lim\inf} \|y^*+y^*_\lambda\| \geqslant \underset{\lambda}{\lim\inf} \text{Re\ }(y^*+y^*_\lambda)(y+Ax_\lambda)\geqslant 1-\delta+\sigma\tau/2.$$  This shows that $\delta^{\text{weak}^*}(\tau, A)/\tau\geqslant \sigma/2$ for all $\tau>0$.

Now suppose $\sigma:=\inf_{\tau>0}\delta^{\text{weak}^*}(\tau, A)/\tau>0$.  Assume $\rho(\sigma, A)>0$ and fix $y\in S_Y$, $(x_\lambda)\subset \sigma B_X$ such that $$\mu:=\inf_\lambda \|y+Ax_\lambda\|>1.$$   For each $\lambda$, fix $y^*_\lambda\in B_{Y^*}$ such that $\text{Re\ }y^*_\lambda(y+Ax_\lambda)\geqslant \mu$. By passing to a subnet, we may assume $\text{Re\ }y^*_\lambda(Ax_\lambda/\sigma)\to \tau$ and $y^*_\lambda\underset{\text{weak}^*}{\to}y^*$.  Note that $$\underset{\lambda}{\lim\inf} \|A^*y^*_\lambda - A^*y^*\| \geqslant \underset{\lambda}{\lim\inf} \text{Re\ }(A^*y^*_\lambda-A^*y^*)(x_\lambda/\sigma)= \tau.$$ Since $$1<\mu \leqslant \|y^*\|\|y\|+\sigma\tau,$$ $\tau>0$.  By Proposition \ref{wasc}, $\|y^*\|\leqslant 1-\sigma \tau$ and $$1+\mu \leqslant \underset{\lambda}{\lim\inf} \text{Re\ }y^*_\lambda(y+Ax_\lambda) = \text{Re\ }y^*(y)+\sigma \tau \leqslant \|y^*\|\|y\|+\sigma\tau \leqslant 1-\sigma\tau + \sigma\tau=1,$$ a contradiction.

\end{proof}

We recall that for Banach spaces, $X_0, X_1$, $X_0\hat{\otimes}_\ee X_1$ is the norm closure in $\mathfrak{L}(X^*_1, X_0)$ of the operators $u$ from $X^*_1$ into $X_0$ of the form $u=\sum_{i=1}^n x_{0, i}\otimes x_{1, i}$, where $(\sum_{i=1}^n x_{0,i}\otimes x_{1,i})(x^*_1)=\sum_{i=1}^n x^*_1(x^*_{1,i})x_{0,i}$.  If $A_0:X_0\to Y_0$ and $A_1:X_1\to Y_1$ are two operators, there is an induced an operator $A_0\otimes A_1:X_1\hat{\otimes}_\ee X_1\to Y_0\hat{\otimes }_\ee Y_1$ which is the linear extension of the map $x_0\otimes x_1\mapsto A_0 x_0\otimes A_1 x_1$. If $A_i=I_{X_i}$, then $A_0\otimes A_1= I_{X_0\hat{\otimes}_\ee X_1}$. We wish to show that each of our asymptotic smoothness properties pass from $A_0, A_1$ to $A_0\otimes A_1$.

\begin{lemma} For $i=0,1$, let $A_i:X_i\to Y_i$ be an operator. Let $R=\max\{1,\|A_0\|, \|A_1\|\}$.  For $\tau>0$, let $$\delta(\tau)= \min \{\delta^{\text{\emph{weak}}^*}(\tau, A_0), \delta^{\text{\emph{weak}}^*}(\tau, A_1)\}.$$  

For $0<\sigma, \tau$, if $\delta(\tau)\geqslant \sigma \tau$. Then $\rho(\sigma/2R, A_0\otimes A_1)\leqslant \sigma\tau$. 

\label{daniel}

\end{lemma}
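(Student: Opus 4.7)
The plan is to dualize using the defining formula for the injective tensor norm, $\|w\|_\ee = \sup\{|\langle w, y_0^*\otimes y_1^*\rangle| : y_i^*\in B_{Y_i^*}\}$. Fix $u\in B_{Y_0\hat{\otimes}_\ee Y_1}$, set $\sigma'=\sigma/2R$, fix a weakly null net $(v_\lambda)\subset\sigma' B_{X_0\hat{\otimes}_\ee X_1}$, and pass first to a subnet achieving $\limsup_\lambda \|u+(A_0\otimes A_1)v_\lambda\|_\ee$. For each $\lambda$ in this subnet pick $y_{i,\lambda}^*\in B_{Y_i^*}$ nearly attaining the norm of $u+(A_0\otimes A_1)v_\lambda$. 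Then, using weak$^*$ compactness and boundedness of $\|A_i^* y_{i,\lambda}^*-A_i^*y_i^*\|$ by $2R$, pass to a further subnet so that $y_{i,\lambda}^*$ converges weak$^*$ to some $y_i^*$ and $\|A_i^* y_{i,\lambda}^*-A_i^* y_i^*\|\to c_i\in[0, 2R]$ for $i=0,1$.

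The key step is to split $\langle u+(A_0\otimes A_1)v_\lambda,\, y_{0,\lambda}^*\otimes y_{1,\lambda}^*\rangle$ into two pieces. The first, $\langle u, y_{0,\lambda}^*\otimes y_{1,\lambda}^*\rangle$, converges to $\alpha:=\langle u, y_0^*\otimes y_1^*\rangle$ (check first on finite-rank tensors, then approximate in $\|\cdot\|_\ee$), and $|\alpha|\leqslant \|u\|_\ee\,\|y_0^*\|\,\|y_1^*\|\leqslant \min(\|y_0^*\|,\|y_1^*\|)$. The second, $\langle v_\lambda, A_0^* y_{0,\lambda}^*\otimes A_1^* y_{1,\lambda}^*\rangle$, is analyzed via the decomposition
$$A_0^* y_{0,\lambda}^*\otimes A_1^* y_{1,\lambda}^*=A_0^* y_0^*\otimes A_1^* y_1^* + (A_0^* y_{0,\lambda}^*-A_0^*y_0^*)\otimes A_1^* y_{1,\lambda}^* + A_0^* y_0^*\otimes(A_1^* y_{1,\lambda}^*-A_1^* y_1^*).$$
Paired against $v_\lambda$, the first summand tends to $0$ because $v_\lambda$ is weakly null against the fixed functional $A_0^* y_0^*\otimes A_1^* y_1^*$; the remaining two ``cross'' summands are bounded in modulus by $\sigma' R\cdot \|A_i^* y_{i,\lambda}^*-A_i^* y_i^*\|$, so the whole second piece has $\limsup$ modulus at most $\sigma' R(c_0+c_1)=\sigma(c_0+c_1)/2\leqslant \sigma\max(c_0,c_1)$.

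The endgame is a short case split on $M:=\max(c_0,c_1)$. If $M<\tau$, then $|\alpha|\leqslant 1$ and the second piece contributes at most $\sigma M<\sigma\tau$, giving the bound $1+\sigma\tau$. If $M\geqslant\tau$, say $c_0=M$ by symmetry, then Proposition \ref{wasc} applies with the very $\sigma,\tau$ from the hypothesis (since $\delta^{\text{weak}^*}(\tau, A_0)\geqslant\delta(\tau)\geqslant\sigma\tau$) and gives $\|y_0^*\|\leqslant 1-\sigma c_0$, hence $|\alpha|\leqslant 1-\sigma M$. Combined with the $\sigma M$ bound on the second piece, the total is $\leqslant 1\leqslant 1+\sigma\tau$. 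Either way $\limsup_\lambda\|u+(A_0\otimes A_1)v_\lambda\|_\ee\leqslant 1+\sigma\tau$, giving $\rho(\sigma/2R, A_0\otimes A_1)\leqslant\sigma\tau$.

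The delicate step is choosing the right decomposition of $A_0^* y_{0,\lambda}^*\otimes A_1^* y_{1,\lambda}^*$: one slot must carry the exact weak$^*$ limit $y_i^*$ in the diagonal term so that weak nullity of $v_\lambda$ annihilates it, leaving only ``cross'' error terms whose sizes are the $\|A_i^* y_{i,\lambda}^*-A_i^* y_i^*\|$. Without that specific splitting there is no way to feed the weak$^*$ convergence into the estimate, nor to bring the hypothesis on $\delta^{\text{weak}^*}$ — via Proposition \ref{wasc} — to bear on $\|y_i^*\|$.
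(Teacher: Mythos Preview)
Your proof is correct and follows essentially the same approach as the paper's: choose near-norming functionals $y_{0,\lambda}^*\otimes y_{1,\lambda}^*$, pass to weak$^*$-convergent subnets, decompose $A_0^*y_{0,\lambda}^*\otimes A_1^*y_{1,\lambda}^*$ into a ``diagonal'' term killed by weak nullity of $v_\lambda$ plus two cross terms, and then case-split against $\tau$ to invoke Proposition~\ref{wasc}. The only cosmetic differences are that you argue directly rather than by contradiction and track the individual limits $c_i=\lim_\lambda\|A_i^*y_{i,\lambda}^*-A_i^*y_i^*\|$, whereas the paper tracks the single pairing limit $b=\lim_\lambda \langle 2Rv_\lambda/\sigma,\,A_0^*y_{0,\lambda}^*\otimes A_1^*y_{1,\lambda}^*\rangle$ and then bounds $b/2R$ by the larger of the two $\limsup\|A_i^*y_{i,\lambda}^*-A_i^*y_i^*\|$ before case-splitting on $b/2R$ versus $\tau$.
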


\begin{proof} Suppose not. Assume $\delta(\tau) \geqslant \sigma \tau$ and $\rho(\sigma, A_0\otimes A_1)> \sigma \tau$. Then there exist $\mu>1+\sigma\tau$, $u\in B_{Y_0\hat{\otimes}_\ee Y_1}$, and a weakly null net $(u_\lambda)\subset \frac{\sigma}{2R} B_{X_0\hat{\otimes}_\ee X_1}$ such that $\mu<\|u+A_0\otimes A_1 u_\lambda\|$ for all $\lambda$.    For each $\lambda$, fix $y^*_{0, \lambda}\in B_{Y^*_0}$, $y^*_{1, \lambda}\in B_{Y^*_1}$ such that $$\text{Re\ }y^*_{0, \lambda}\otimes y^*_{1, \lambda}(u+A_0\otimes A_1 u_\lambda)=\|u+A_0\otimes A_1 u_\lambda\|.$$  By passing to a subnet, we may assume there exist $y^*_0\in B_{Y^*_0}$, $y^*_1\in B_{Y^*_1}$, and $b$ such that $$y^*_{0, \lambda}\underset{\text{weak}^*}{\to} y^*_0,$$ $$y^*_{1, \lambda}\underset{\text{weak}^*}{\to} y^*_1,$$  $$A^*_0y^*_{0, \lambda}\otimes A^*_1 y^*_{1, \lambda}(2Ru_\lambda/\sigma)=y^*_{0, \lambda}\otimes  y^*_{1, \lambda}(A_0\otimes A_1 (2Ru_\lambda/\sigma))\to b.$$

 Note that $$1+\sigma \tau<\mu \leqslant \lim_\lambda \text{Re\ }y^*_{0, \lambda}\otimes y^*_{1, \lambda}(u+A_0\otimes A_1 u_\lambda) = \text{Re\ }y^*_0\otimes y^*_1(u)+\sigma b/2R\leqslant 1+\sigma b/2R.$$ From this we deduce that $b>0$. Note also that \begin{align*} \underset{\lambda}{\lim\sup} & \|A^*_0y^*_{0, \lambda}\otimes A^*_1 y^*_{1, \lambda}-A^*_0y^*_0\otimes A^*_1 y^*_1\| \\ & \geqslant \lim_\lambda \text{Re\ }(A^*_0y^*_{0, \lambda}\otimes A^*_1 y^*_{1, \lambda}-A^*_0y^*_0\otimes A^*_1 y^*_1)(2Ru_\lambda/\sigma)=b.\end{align*}  Now note that \begin{align*} b  \leqslant & \underset{\lambda}{\lim\sup}  \|A^*_0y^*_{0, \lambda}\otimes A^*_1 y^*_{1, \lambda}-A^*_0y^*_0\otimes A^*_1 y^*_1\| \\ & \leqslant \underset{\lambda}{\lim\sup} \|A^*_0y^*_{0, \lambda}\otimes A^*_1 y^*_{1, \lambda}-A^*_0y^*_0\otimes A^*_1 y^*_{1, \lambda}\| \\ & + \underset{\lambda}{\lim\sup} \|A^*_0y^*_0\otimes A^*_1 y^*_{1, \lambda}-A^*_0y^*_0\otimes A^*_1 y^*_1\| \\ & = \underset{\lambda}{\lim\sup} \|A^*_0y^*_{0, \lambda}-A^*_0y^*_0\|\|A^*_1y^*_{1, \lambda}\| \\ & + \underset{\lambda}{\lim\sup} \|A^*_0 y^*_0\|\|A^*_1 y^*_{1, \lambda}-A^*_1 y^*_1\| \\ & \leqslant 2R\max\{\underset{\lambda}{\lim\sup} \|A^*_0y^*_{0, \lambda}-A^*_0y^*_0\|, \underset{\lambda}{\lim\sup} \|A^*_1 y^*_{1, \lambda}-A^*_1y^*_1\|\}. \end{align*}  By passing to a subnet once more and switching $A_0$ and $A_1$ if necessary, we may assume $$\frac{b}{2R} \leqslant \underset{\lambda}{\lim\inf}\|A^*_0 y^*_{0, \lambda}-A^*_0 y^*_0\|.$$    If $b/2R\geqslant \tau$, $\|y^*_0\otimes y^*_1\|\leqslant \|y^*_0\|\leqslant 1-\sigma b/2R$  by Proposition \ref{wasc}.  From this we deduce that $$1+\sigma\tau< \|y^*_0\otimes y^*_1\|\|u\|+\sigma b/2R  \leqslant 1-\sigma b/2R+\sigma b/2R=1$$ a contradiction.  If $b/2R<\tau$, then $$1+\sigma \tau \leqslant \|y^*_0\otimes y^*_1\|\|u\|+ \sigma b/2R<1+\sigma \tau, $$ another contradiction.

\end{proof}

We say a property which an operator may or may not possess  \emph{passes to injective tensor products of operators} if for any two operators $A_0:X_0\to Y_0$,$ A_1:X_1\to Y_1$ with this property, $A_0\otimes A_1:X_0\hat{\otimes}_\ee X_1\to Y_0\hat{\otimes}_\ee Y_1$ has this property.  We say a property which Banach spaces may or may not possess \emph{passes to injective tensor products of Banach spaces} if for any two Banach spaces $X_0$, $X_1$ with this property, $X_0\hat{\otimes}_\ee X_1$ has this property. 

\begin{corollary} For any $1<p<\infty$, the following properties pass to injective tensor products of operators and of Banach spaces. \begin{enumerate}[(i)]\item Asymptotic uniform smoothness. \item $p$-asymptotic uniform smoothness. \item Asymptotic uniform flatness. \item Asymptotic uniform smoothability. \item $p$-asymptotic uniform smoothability. \item Asymptotic uniform flatness. \item $\textbf{\emph{p}}(\cdot)\leqslant p$. 
\end{enumerate}

\label{tensor}

\end{corollary}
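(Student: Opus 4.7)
The plan is to let Lemma \ref{daniel} do the heavy lifting: it converts lower bounds on $\delta^{\text{weak}^*}(\cdot, A_i)$ into upper bounds on $\rho(\cdot, A_0 \otimes A_1)$, while Proposition \ref{duality} recasts each of (i)--(iii) as a condition on $\delta^{\text{weak}^*}(\cdot, A_i)$. Writing $\delta(\tau) = \min\{\delta^{\text{weak}^*}(\tau, A_0), \delta^{\text{weak}^*}(\tau, A_1)\}$ and $R = \max\{1, \|A_0\|, \|A_1\|\}$ as in the lemma, I will first dispatch (i)--(iii) directly, then obtain (iv)--(vi) by renorming, and finally reduce (vii) to (v) via Theorem \ref{pullman}.

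For (i), the AUS hypothesis on each $A_i$ gives $\delta(\tau) > 0$ for all $\tau > 0$. Given any $\ee > 0$, pick $\tau > 0$ with $2R\tau < \ee$ and set $\sigma = \delta(\tau)/\tau > 0$. Lemma \ref{daniel} then yields $\rho(\sigma/2R, A_0 \otimes A_1)/(\sigma/2R) \leqslant 2R\tau < \ee$; since $\rho(\cdot)/\cdot$ is non-decreasing by convexity of $\rho$, this forces $\inf_{\sigma > 0} \rho(\sigma, A_0 \otimes A_1)/\sigma = 0$. For (ii), Proposition \ref{duality}(ii) provides $c > 0$ and $\tau_1 > 0$ with $\delta(\tau) \geqslant c\tau^q$ for $0 < \tau < \tau_1$, where $q$ is conjugate to $p$. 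Setting $\sigma = c\tau^{q-1}$, Lemma \ref{daniel} combined with the identity $(p-1)q = p$ gives $\rho(\sigma/2R, A_0 \otimes A_1) \leqslant c\tau^q = c^{1-p}\sigma^p$, so $A_0 \otimes A_1$ is $p$-AUS. For (iii), the AUF hypothesis gives some $\sigma_0 > 0$ with $\delta(\tau) \geqslant \sigma_0 \tau$ for every $\tau > 0$; Lemma \ref{daniel} then gives $\rho(\sigma_0/2R, A_0 \otimes A_1) \leqslant \sigma_0\tau$ for every $\tau > 0$, and letting $\tau \to 0^+$ yields $\rho(\sigma_0/2R, A_0 \otimes A_1) = 0$.

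For (iv)--(vi), fix equivalent norms $|\cdot|_i$ on $Y_i$ so that $A_i : X_i \to (Y_i, |\cdot|_i)$ has the corresponding isometric property. Equivalent norms on the factors produce equivalent dual balls, and since the injective norm is a supremum over products of dual functionals, the injective tensor norms on $Y_0 \otimes Y_1$ associated with $(|\cdot|_0, |\cdot|_1)$ and $(\|\cdot\|_0, \|\cdot\|_1)$ are equivalent. Applying (i)--(iii) to $A_i : X_i \to (Y_i, |\cdot|_i)$ yields the corresponding isometric property for $A_0 \otimes A_1$ into the $|\cdot|$-injective tensor product, which descends to the desired isomorphic property on the original injective tensor product.

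For (vii), if $\textbf{p}(A_i) \leqslant p$ then $Sz(A_i) \leqslant \omega$ (otherwise $\textbf{p}(A_i) = \infty$). If $Sz(A_i) < \omega$, Theorem \ref{pullman}(ii) says $A_i$ is asymptotically uniformly flat, hence $r$-AUS-able for every $r \in (1, \infty)$. If $Sz(A_i) = \omega$, Theorem \ref{pullman}(iii) yields $\textbf{p}(A_i)' \geqslant p'$, so $A_i$ is $r$-AUS-able for every $r < p'$. Either way, each $A_i$ is $r$-AUS-able for every $r < p'$; (v) transfers this to $A_0 \otimes A_1$, and Theorem \ref{pullman}(iii) (together with the fact that $r$-AUS-ability for some $r > 1$ forces $Sz(A_0 \otimes A_1) \leqslant \omega$) gives $\textbf{p}(A_0 \otimes A_1) \leqslant p$. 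The main obstacle is keeping the conjugate exponents straight in (ii) and handling the $Sz(A_i) < \omega$ case in (vii); once Lemma \ref{daniel} is in hand the rest is essentially bookkeeping.
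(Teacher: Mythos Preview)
Your proof is correct and follows essentially the same approach as the paper: you use Lemma \ref{daniel} together with Proposition \ref{duality} to handle (i)--(iii), renorm the factors for (iv)--(vi), and invoke Theorem \ref{pullman} for (vii). Your treatment is in fact slightly more explicit than the paper's in justifying why equivalent norms on the factors induce an equivalent injective norm and in handling the $Sz(A_i)<\omega$ subcase of (vii), but the underlying argument is the same.
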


\begin{proof} We prove $(i)$-$(iii)$. Items $(iv)$-$(vi)$ follows from $(i)$-$(iii)$ and renorming, while item $(vii)$ follows from item $(v)$ and Theorem \ref{pullman}.

In the proof, suppose $A_0$, $A_1$, $R$, and $\delta$ are as in Lemma \ref{daniel}. We go through the usual Young duality. However, since we must go through the function $\delta$, we include the proof. We will use Proposition \ref{duality} throughout. 

If $A_0, A_1$ are asymptotically uniformly smooth, $\delta(\tau)>0$ for all $\tau>0$. Fix $\tau>0$ and let $\sigma=\delta(\tau)/2R\tau>0$.  Then $\delta(\tau)= (2R\sigma)\tau$, whence $\rho(\sigma, A_0\otimes A_1)\leqslant 2R\sigma \tau$.  Since $\tau>0$ was arbitrary, $\inf_{\sigma>0} \rho(\sigma, A_0\otimes A_1)= 0$. But $\rho(\cdot, A_0\otimes A_1)$ is convex and $\rho(0, A_0\otimes A_1)=0$, whence $$\lim_{\sigma\to 0^+}\rho(\sigma, A_0\otimes A_1)/\sigma=\inf_{\sigma>0} \rho(\sigma, A_0\otimes A_1)/\sigma=0.$$  Thus $A_0\otimes A_1$ is asymptotically uniformly smooth.

Now assume that for $1<p,q<\infty$ with $1/p+1/q=1$, $A_0$, $A_1$ are $p$-asymptotically uniformly smooth. Then there exists $0<c<1$ such that for all $0<\tau<1$, $\delta(\tau)\geqslant c\tau^q$.  Fix $0<\sigma<c/2R$ and let $c'=2R(2R/c)^{\frac{1}{q-1}}$ and $\tau=(2R\sigma/c)^\frac{1}{q-1}\in (0,1)$.  Then $$\delta(\tau)\geqslant c \tau^q = 2R\sigma \tau.$$ From this we deduce that $$\rho(\sigma, A_0\otimes A_1) \leqslant 2R\sigma \tau = c' \sigma^p .$$ Thus we deduce that $\rho(\sigma, A_0\otimes A_1)\leqslant c'\sigma^p$ for all $0<\sigma <c/2R$. Thus $\sup_{\sigma>0} \rho(\sigma, A_0\otimes A_1)/\sigma^p<\infty$, and $A_0\otimes A_1$ is $p$-asymptotically uniformly smooth. 

Now assume that $A_0$, $A_1$ are asymptotically uniformly flat. Then there exists $0<c<1$ such that $\delta(\tau)\geqslant c\tau$ for all $\tau>0$. Then with $\sigma=c/2R$, since $\delta(\tau)\geqslant 2R\sigma \tau$ for all $\tau>0$, $\rho(\sigma, A_0\otimes A_1)\leqslant 2R\sigma \tau$ for all $\tau>0$. Since this holds for all $\tau>0$, $\rho(\sigma, A_0\otimes A_1)=0$. Thus $A_0\otimes A_1$ is asymptotically uniformly flat.

\end{proof}

\begin{rem}\upshape Of course, there is a partial converse to Corollary \ref{tensor}: If $A_0\otimes A_1$ possesses any of the seven properties listed in Corollary \ref{tensor}, then either $A_0=0$, $A_1=0$, or both $A_0$ and $A_1$ possess that same property.

\end{rem}

\section{Ideals}

In this section, we let $\textbf{Ban}$ denote the class of all Banach spaces over $\mathbb{K}$. We let $\mathfrak{L}$ denote the class of all operators between Banach spaces and for $X,Y\in \textbf{Ban}$, we let $\mathfrak{L}(X,Y)$ denote the set of operators from $X$ into $Y$. For $\mathfrak{I}\subset \mathfrak{L}$ and $X,Y\in \textbf{Ban}$, we let $\mathfrak{I}(X,Y)=\mathfrak{I}\cap \mathfrak{L}(X,Y)$. We recall that a class $\mathfrak{I}$ is called an ideal if \begin{enumerate}[(i)]\item For any $W,X,Y,Z\in \textbf{Ban}$, any $C\in \mathfrak{L}(W,X)$, $B\in \mathfrak{I}(X,Y)$, and $A\in \mathfrak{L}(Y,Z)$, $ABC\in \mathfrak{I}$, \item $I_\mathbb{K}\in \mathfrak{I}$, \item for each $X,Y\in \textbf{Ban}$, $\mathfrak{I}(X,Y)$ is a vector subspace of $\mathfrak{L}(X,Y)$. \end{enumerate}

We recall that an ideal $\mathfrak{I}$ is said to be  \emph{closed} provided that for any $X,Y\in \textbf{Ban}$, $\mathfrak{I}(X,Y)$ is closed in $\mathfrak{L}(X,Y)$ with its norm topology. 

If $\mathfrak{I}$ is an ideal and $\iota$ assigns to each member of $\mathfrak{I}$ a non-negative real value, then we say $\iota$ is an \emph{ideal norm} provided that \begin{enumerate}[(i)]\item for each $X,Y\in \textbf{Ban}$, $\iota$ is a norm on $\mathfrak{I}(X,Y)$, \item for any $W,X,Y,Z\in \textbf{Ban}$ and any $C\in \mathfrak{L}(W,X)$, $B\in \mathfrak{I}(X,Y)$, $A\in \mathfrak{I}(Y,Z)$, $\iota(ABC)\leqslant \|A\|\iota(B)\|C\|$, \item for any $X,Y\in \textbf{Ban}$, any $x\in X$, and any $y\in Y$, $\iota(x\otimes y)=\|x\|\|y\|$. \end{enumerate}

If $\mathfrak{I}$ is an ideal and $\iota$ is an ideal norm on $\mathfrak{I}$, we say $(\mathfrak{I}, \iota)$ is a \emph{Banach ideal} provided that for every $X,Y\in \textbf{Ban}$, $(\mathfrak{I}(X,Y), \iota)$ is a Banach space.

For $1<p\leqslant\infty$, we let $T_p(A)$ denote the infimum of those $C$ such that $A$ satisfies $C$-$\ell_p$ upper tree estimates (resp. $C$-$c_0$ upper tree estimates if $p=\infty$). We observe the convention that $T_p(A)$ is defined for all operators, and $T_p(A)=\infty$ if there exists no $C$ such that $A$ satisfies $C$-$\ell_p$ upper tree estimates.   Let $\mathfrak{t}_p(A)=\|A\|+T_p(A)$. Let $\mathfrak{T}_p$ denote the class of operators $A$ such that $\mathfrak{t}_p(A)<\infty$.  By Theorem \ref{renorming1}, for $1<p<\infty$, $\mathfrak{T}_p$ is the class of $p$-asymptotically uniformly smoothable operators and $\mathfrak{T}_\infty$ is the class of asymptotically uniformly flattenable operators.

We will need the following obvious fact. 

\begin{proposition} Let $I$ be a non-empty set and fix $1<p<\infty$. Suppose that for each $i\in I$, $A_i:X_i\to Y_i$ is an operator. Suppose also that $\sup_{i\in I}\|A_i\|<\infty$. Then if $A:X:=(\oplus_{i\in I}X_i)_{\ell_p(I)}\to Y:=(\oplus_{i\in I} Y_i)_{\ell_p(I)}$ is defined by $A|_{X_i}=A_i$, then $$\textbf{\emph{t}}_p(A)\leqslant \sup_{i\in I} \max\{\textbf{\emph{t}}_p(A_i), \|A_i\|\}.$$  

The analogous result holds for $\textbf{\emph{t}}_\infty$ if we replace the $\ell_p(I)$ direct sum with the $c_0(I)$ direct sum.

\end{proposition}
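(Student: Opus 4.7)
The plan is to verify the defining inequality for $\textbf{t}_p$ directly. Fix $y\in Y$, $\sigma>0$, and a weakly null net $(x_\lambda)\subset\sigma B_X$; write $y=(y_i)_i$ and $x_\lambda=(x_{\lambda,i})_i$. Continuity of the coordinate projections makes each $(x_{\lambda,i})_\lambda$ weakly null in $X_i$, and the $\ell_p$-direct-sum structure gives $Ax_\lambda=(A_ix_{\lambda,i})_i$ together with
$$\|y+Ax_\lambda\|^p=\sum_{i\in F}\|y_i+A_ix_{\lambda,i}\|^p+\|y^{F^c}+A^{F^c}x_\lambda^{F^c}\|^p$$
for any $F\subset I$. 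Setting $M=\sup_i\max\{\textbf{t}_p(A_i),\|A_i\|\}$, the goal is to show that $\limsup_\lambda\|y+Ax_\lambda\|^p\leqslant\|y\|^p+M^p\sigma^p$.

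I would proceed by contradiction: suppose the limsup exceeds $\|y\|^p+M^p\sigma^p$ by some $\mu>0$, so that on some cofinal subnet the strict inequality $\|y+Ax_\lambda\|^p>\|y\|^p+M^p\sigma^p+\mu$ holds throughout. Given $\epsilon>0$ (to be chosen small in terms of $\mu$), pick a finite $F\subset I$ with $\|y^{F^c}\|<\epsilon$. By compactness of $[0,\sigma]^{F\cup\{\ast\}}$, pass to a further cofinal subnet on which $\|x_{\lambda,i}\|\to\tau_i$ for every $i\in F$ and $\|x_\lambda^{F^c}\|\to\tau^{F^c}$; commuting the limit with the finite sum yields $\sum_{i\in F}\tau_i^p+(\tau^{F^c})^p\leqslant\sigma^p$. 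For each $i\in F$, eventually $\|x_{\lambda,i}\|\leqslant\tau_i+\delta$, and applying $\textbf{t}_p(A_i)\leqslant M$ to this eventual weakly null net in $(\tau_i+\delta)B_{X_i}$ gives $\limsup_\lambda\|y_i+A_ix_{\lambda,i}\|^p\leqslant\|y_i\|^p+M^p(\tau_i+\delta)^p$; sending $\delta\to 0$ produces $\|y_i\|^p+M^p\tau_i^p$. For the tail, the triangle inequality together with the direct-sum bound $\|A^{F^c}\|\leqslant M$ gives $\|y^{F^c}+A^{F^c}x_\lambda^{F^c}\|\leqslant\|y^{F^c}\|+M\|x_\lambda^{F^c}\|$, so the limsup of its $p$-th power is at most $(\|y^{F^c}\|+M\tau^{F^c})^p$.

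The principal obstacle is the tail cross term: $(\|y^{F^c}\|+M\tau^{F^c})^p$ is not automatically bounded by $M^p(\tau^{F^c})^p$. I would handle it with the elementary mean-value estimate $(a+b)^p-b^p\leqslant p(a+b)^{p-1}a$, which here bounds the excess by $p(\|y\|+M\sigma)^{p-1}\|y^{F^c}\|\leqslant C\epsilon$ for a constant $C=C(p,\|y\|,M,\sigma)$. Combining this with the finite-$F$ estimates (and using that $\limsup$ distributes over finite sums as an upper bound), one obtains $\limsup_\lambda\|y+Ax_\lambda\|^p\leqslant\|y\|^p+M^p\sigma^p+C\epsilon$ on the sub-subnet; yet this sub-subnet inherits $\|y+Ax_\lambda\|^p>\|y\|^p+M^p\sigma^p+\mu$, which contradicts the prior estimate as soon as $\epsilon<\mu/C$. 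The $c_0$-direct-sum case runs through the same pattern more cleanly: $\limsup$ commutes with suprema over finite index sets, and the tail bound $\sup_{i\notin F}\|y_i+A_ix_{\lambda,i}\|\leqslant\|y^{F^c}\|_{c_0}+M\sigma$ contains no cross term, so the argument closes without the mean-value detour.
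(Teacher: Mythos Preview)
Your argument is correct and follows the same strategy as the paper's proof: split $y+Ax_\lambda$ into finitely many coordinates plus a tail, pass to a subnet along which the coordinate norms converge, apply the $\textbf{t}_p(A_i)$ bound on each coordinate and the uniform operator-norm bound on the tail. The only difference is in handling the tail: the paper sidesteps your mean-value cross-term estimate by observing at the outset that finitely supported $y$ are dense in $Y$, so one may assume $y^{F^c}=0$ and the tail contributes only $\|A^{F^c}x_\lambda^{F^c}\|^p$.
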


\begin{proof} If the supremum on the right is infinite, there is nothing to show. Assume the supremum is finite. Fix $y=(y_i)_{i\in I}\in Y$ such that $J:=\{i\in I: y_i\neq 0\}$ is finite. Note that the set of such $y$ is dense in $Y$. Now fix $\sigma\geqslant 0$ and a weakly null net $(x_\lambda)\subset \sigma B_X$ and assume $\|y+Ax_\lambda\|\geqslant \mu$ for all $\lambda$. By passing to a subnet, we may assume there exist scalars $a, a_i$, $i\in J$ such that $a=\lim_\lambda \|P_{I\setminus J} x_\lambda\|$ and $a_i=\lim_\lambda \|P_i x_\lambda\|$ for each $i\in J$. Note that $a^p+\sum_{i\in J}a_i^p \leqslant \sigma^p$. Let $C=\sup_{i\in I} \max\{\textbf{t}_p(A_i), \|A_i\|\}$ and note that \begin{align*} \underset{\lambda}{\lim\sup} \|y+Ax_\lambda\|^p & \leqslant \underset{\lambda}{\lim} \|P_{I\setminus J} Ax_\lambda\|^p + \sum_{i\in J}\underset{\lambda}{\lim\sup}\|P_i(y+Ax_\lambda)\|^p \\ & \leqslant \bigl(\sup_{i\in I}\|A_i\|^p\bigr)\sigma^p + \sum_{i\in I}\|y_i\|^p+\textbf{t}_p(A_i)^pa_i^p \\ & \leqslant \|y\|^p+C^p\sigma^p. \end{align*}

\end{proof}

We recall that by the proof of Theorem \ref{renorming1}, $T_p(A)\leqslant \textbf{t}_p(A)$. 

\begin{theorem} For each $1<p\leqslant \infty$, $(\mathfrak{T}_p, \mathfrak{t}_p)$ is a Banach ideal.

\end{theorem}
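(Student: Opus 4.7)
The plan is to verify in order that $\mathfrak{T}_p$ is an ideal, that $\mathfrak{t}_p$ is an ideal norm, and that every $\mathfrak{t}_p$-Cauchy sequence in $\mathfrak{T}_p(X,Y)$ converges. The first two are routine; completeness is the main obstacle.

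For the ideal and norm axioms, the key estimate is $T_p(ABC) \leq \|A\|\, T_p(B)\, \|C\|$ for $C \in \mathfrak{L}(W,X)$, $B \in \mathfrak{T}_p(X,Y)$, $A \in \mathfrak{L}(Y,Z)$. If $D$ is a weak neighborhood basis at $0$ in $W$ and $(w_t)_{t \in D^{<\nn}} \subset B_W$ is normally weakly null, then $(Cw_t/\max\{1,\|C\|\})_{t \in D^{<\nn}}$ is normally weakly null in $B_X$ by weak-to-weak continuity of $C$; applying any $C_0$-$\ell_p$ upper tree estimate for $B$ with $C_0 > T_p(B)$ and multiplying by $\|A\|\max\{1,\|C\|\}$ yields the bound. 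Combined with $\|ABC\| \leq \|A\|\|B\|\|C\|$, this gives $\mathfrak{t}_p(ABC) \leq \|A\|\mathfrak{t}_p(B)\|C\|$. Subadditivity $T_p(B_1+B_2) \leq T_p(B_1) + T_p(B_2)$ follows from the observation, already used in the paper, that the intersection of two big sets is big: on the common witnessing tree the triangle inequality applies coefficient-wise. Since compact operators have $\rho(\sigma, \cdot) \equiv 0$ and hence $T_p = 0$, we obtain $\mathfrak{t}_p(I_\mathbb{K}) = 1$ and $\mathfrak{t}_p(x^* \otimes y) = \|x^*\|\|y\|$ for rank-one operators.

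For completeness, let $(A_n) \subset \mathfrak{T}_p(X,Y)$ be $\mathfrak{t}_p$-Cauchy. Since $\|\cdot\| \leq \mathfrak{t}_p$, the sequence is operator-norm Cauchy and converges to some $A \in \mathfrak{L}(X,Y)$. I claim the key lemma: $T_p$ is lower semi-continuous with respect to operator-norm convergence, that is, if $B_m \to B$ in operator norm and $T_p(B_m) \leq K$ for every $m$, then $T_p(B) \leq K$. Granted this, applying the lemma to $B_m := A_n - A_m$, which converges in operator norm to $A_n - A$ as $m \to \infty$ with $T_p$ bounded by the Cauchy condition, yields $T_p(A_n - A) \to 0$; together with $\|A_n - A\| \to 0$ this gives $\mathfrak{t}_p(A_n - A) \to 0$ and hence $A = A_1 + (A - A_1) \in \mathfrak{T}_p$.

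The heart of the proof is the lower semi-continuity lemma. I would argue by contradiction, combining the dichotomy from Corollary \ref{dichotomy} with the game determinacy of Lemma \ref{determined}. Fix $\delta > 0$ and a normally weakly null $(x_t)_{t \in D^{<\nn}} \subset B_X$, and suppose the set $G$ of nodes $t$ satisfying the $(K+\delta)$-$\ell_p$ estimate for $B$ is not big. By determinacy Player II has a winning strategy $\psi$ in the $G$-game. For each $m$, since $T_p(B_m) \leq K$, Player I has a winning strategy $\phi_m$ in the analogous $G_m$-game for $B_m$ at constant $K + \delta/2$. The plan is to combine the $\phi_m$'s by using the directedness of $D$: at round $k$, Player I plays an upper bound $u_k \in D$ of the finite set $\{\phi_m(v_1,\ldots,v_{k-1}) : m \leq k\}$, and Player II responds according to $\psi$ with $v_k \geq u_k$. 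Together with the downward closure of each $G_m$ and the level-$k$ estimate $\|B\sum a_i x\| \leq \|B_m \sum a_i x\| + \|B - B_m\|\,k^{1-1/p}\|(a_i)\|_{\ell_p}$, a diagonal choice $m = m(k) \to \infty$ slowly enough that $\|B - B_{m(k)}\|\,k^{1-1/p} < \delta/2$ forces $(v_1,\ldots,v_k) \in G$ at every level $k$, contradicting $\psi$'s winning. The most delicate point is ensuring that the combined strategy really forces the play into $G_{m(k)}$ at level $k$; this is where downward closure of $G_m$ and the freedom to choose inevitable subsets of $G_m$ that remain compatible along cofinal chains in $D$ play essential roles, and the bulk of the technical work lies here.
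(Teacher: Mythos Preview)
Your treatment of the ideal-norm axioms is broadly right, though one point is stated too loosely: when you write that $(Cw_t/\max\{1,\|C\|\})_{t\in D^{<\nn}}$ is ``normally weakly null in $B_X$,'' the index set $D$ is a weak neighborhood basis in $W$, not in $X$, so this collection is not normally weakly null in the sense required to invoke the upper tree estimate for $B$. The paper handles the right-composition bound $T_p(AB)\leqslant\|B\|T_p(A)$ by contradiction via the dichotomy (Corollary~\ref{dichotomy}), pushing a bad tree in $B_W$ forward to $B_X$ through an explicit reindexing $\phi:D_X\to D_W$. Your idea is morally the same, but the reindexing must actually be performed.

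The real gap is in completeness. You reduce to a lower semicontinuity lemma --- $T_p(B_m)\leqslant K$ and $\|B_m-B\|\to 0$ imply $T_p(B)\leqslant K$ --- and then sketch a diagonal-strategy argument. That sketch does not work as written: at round $k$ you take $u_k$ dominating $\phi_m(v_1,\ldots,v_{k-1})$ only for $m\leqslant k$, so for any fixed $m>1$ the initial segment $(v_1,\ldots,v_{m-1})$ need not be $\phi_m$-admissible, and hence you have no reason to conclude $(v_1,\ldots,v_k)\in G_m$ for the particular $m=m(k)$ you need. ``Downward closure of $G_m$'' does not help here, because the issue is not whether predecessors of a good node are good, but whether the play ever enters the inevitable subset witnessing bigness of $G_{m(k)}$ at all. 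The inevitable subsets for different $m$ have no reason to be nested or cofinally compatible, and I do not see how ``freedom to choose inevitable subsets'' resolves this. Whether the lower semicontinuity lemma itself is even true is not clear from your argument.

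The paper takes a completely different route that avoids lower semicontinuity entirely. It passes to a rapidly Cauchy subsequence, invokes the renorming theorem (Remark~\ref{useful remark}) to equip $Y$ with norms $|\cdot|_n$ for which the one-step modulus $\textbf{t}_p(\ee_n^{-2}(A_n-A_{n+1}):X\to(Y,|\cdot|_n))$ is bounded by a universal constant $C_p$, and then writes $A_1-A=S_3S_2S_1$ as a factorization through the $\ell_p$-sum $(\oplus_n(Y,|\cdot|_n))_{\ell_p}$ (or $c_0$-sum if $p=\infty$). The diagonal operator $S_2$ has $T_p(S_2)\leqslant 2C_p$ by the direct-sum proposition, and the outer maps $S_1,S_3$ have small norm; the ideal inequality then bounds $T_p(A_1-A)$. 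This approach crucially \emph{uses} the main renorming theorem of the paper, which is why the Banach-ideal result appears after it.
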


\begin{proof} It is evident that $T_p(cA)=|c|T_p(A)$ for any scalar.  It is also clear that $T_p$ satisfies the triangle inequality, since the intersection of two big trees is big. From this it follows that $\mathfrak{t}_p$ is a norm on $\mathfrak{T}_p(X,Y)$ for any $X,Y\in \textbf{Ban}$. Moreover,  $T_p(A)=0$ for any compact operator, so that $\mathfrak{T}_p$ contains all finite rank operators and $$\mathfrak{t}_p(x\otimes y)= \|x\otimes y\|+0=\|x\|\|y\|$$ for any $X,Y\in \textbf{Ban}$ and any $x\in X$, $y\in Y$. 

Now suppose that $X,Y,Z\in \textbf{Ban}$ and $A:X\to Y$, $B:Y\to Z$ are operators with $\|B\|=1$.  Fix a normally weakly null $(x_t)_{t\in D^{<\nn}}\subset B_X$, where $D$ is a weak neighborhood basis at $0$ in $X$.  Fix $C>T_p(A)$ and note that \begin{align*} \bigcup_{n=1}^\infty &\Bigl\{t\in D^n: (\forall (a_i)_{i=1}^n\in \mathbb{K}^n)\Bigl(\|BA\sum_{i=1}^n a_i x_{t|_i}\|\leqslant C\Bigr)\Bigr\} \\ & \supset \bigcup_{n=1}^n \Bigl\{t\in D^n: (\forall (a_i)_{i=1}^n\in \mathbb{K}^n)\Bigl(\|A\sum_{i=1}^n a_i x_{t|_i}\|\leqslant C\Bigr)\Bigr\},\end{align*} and by definition the latter set is big. This shows that $T_p(BA)\leqslant T_p(A)$ when $\|B\|=1$, whence by homogeneity we deduce that for any $A:X\to Y$ and $B:Y\to Z$, $T_p(BA)\leqslant \|B\|T_p(A)$.

Now suppose that $W,X,Y\in \textbf{Ban}$ and $B:W\to X$, $A:X\to Y$ operators with $\|B\|=1$.  Given a finite subset $F$ of $W^*$ and $\ee>0$, let $$\mathcal{U}_{F,\ee}=\{w\in W: (\forall w^*\in F)(|w^*(w)|<\ee)\}.$$ Given a finite subset $F$ of $X^*$ and $\ee>0$, let $$\mathcal{V}_{F, \ee}=\{x\in X: (\forall x^*\in F)(|x^*(x)|<\ee)\}.$$ Let $$D_W=\{\mathcal{U}_{F, \ee}: F\subset W^*\text{\ finite},\ee>0\}$$ and $$D_X= \{\mathcal{V}_{F, \ee}: F\subset X^*\text{\ finite}, \ee>0\}.$$  Now assume $T_p(AB)>T_p(A)$. Then by Corollary \ref{dichotomy}, there exist $T_p(AB)>C>T_p(A)$ and a normally weakly null $(w_t)_{t\in D_W^{<\nn}}\subset B_W$ such that for every $(v_i)_{i=1}^\infty\in D^\nn_W$, there exist $n\in \nn$ and $(a_i)_{i=1}^n\in S_{\ell_p^n}$ such that $$\|AB\sum_{i=1}^n a_i w_{t|_i}\|> C\|(a_i)_{i=1}^n\|_{\ell_p^n}.$$  Define $\phi:D_X\to D_W$ by $\phi(\mathcal{V}_{F, \ee})=\mathcal{U}_{A^*F, \ee}$ and define $\Phi:D_X^{<\nn}\to D_W^{<\nn}$ by $\Phi((v_i)_{i=1}^n)=(\phi(u_i))_{i=1}^n$.  Let $x_t= Bw_{\Phi(t)}$. Then $(x_t)_{t\in D_X^{<\nn}}\subset B_X$ is normally weakly null.  Moreover, $$S:=\bigcup_{n=1}^\infty \Bigl\{t\in D^n_X:(\forall (a_i)_{i=1}^n\in \mathbb{K}^n)\Bigl(\|A\sum_{i=1}^n a_i x_{t|_i}\|\leqslant C\Bigr)\Bigr\}$$ cannot be big, otherwise there would exist $(v_i)_{i=1}^\infty\in D_X^\nn$ such that $(v_i)_{i=1}^n\in S$ for all $n\in \nn$. However, by our choice of $(w_t)_{t\in D^{<\nn}_W}$, no such $(v_i)_{i=1}^\infty$ could exist, since then for all $n\in \nn$ and all $(a_i)_{i=1}^n\in \mathbb{K}^n$, $$\|AB\sum_{i=1}^n a_i w_{\Phi(t|_i)}\| = \|A\sum_{i=1}^n a_i x_{t|_i}\| \leqslant C\|(a_i)_{i=1}^n\|_{\ell_p^n},$$ while $(\Phi(t|_i))_{i=1}^\infty \in D^\nn_W$ does not have this property.

The above arguments show that $\mathfrak{t}_p$ is an ideal norm. It remains to show that for any $X,Y$, $(\mathfrak{T}_p(X,Y), \mathfrak{t}_p)$ is a Banach space.  To that end, fix a $\mathfrak{t}_p$-Cauchy sequence $(A_n)_{n=1}^\infty$ in $\mathfrak{T}_p(X,Y)$.  Then this sequence is norm Cauchy, and has a norm limit, say $A$. To obtain a contradiction, suppose there exists $0<\ee<1$ such that $\underset{n}{\lim\sup} \mathfrak{t}_p(A_n-A)> \ee$. Of course, this means that $\underset{n}{\lim\sup} T_p(A_n-A)>\ee$.   Recall the convention that $T_p(B)=\infty$ when $B$ does not lie in $\mathfrak{T}_p$, so that the value $T_p(A_n-A)$ is defined and does not assume that $A\in \mathfrak{T}_p$.   Now fix numbers $(\ee_n)_{n=1}^\infty \subset (0,1)$ such that $4C_p\sum_{n=1}^\infty \ee_n <\ee$.  Here, $C_p\geqslant 1$ is the constant from Remark \ref{useful remark} By passing to a subsequence, we may assume that for every $n\in \nn$, $T_p(A_n-A)>\ee$ and $\mathfrak{t}_p(A_n-A_{n+1})<\ee_n^2$.   Since $\mathfrak{t}_p(\ee_n^{-2}(A_n-A_{n+1}))<1$, by Remark \ref{useful remark}, there exists a norm $|\cdot|_n$ on $Y$ such that $A_n-A_{n+1}:X\to (Y, |\cdot|_n)$ satisfies $$\textbf{t}_p(A_n-A_{n+1}:X\to (Y, |\cdot|_n)) \leqslant C_p$$  and $\frac{1}{2}B_Y\subset B_Y^{|\cdot|_n}\subset 2B_Y$.

Now for each $n\in \nn$, let $X_n=X$ and let $Y_n=(Y, |\cdot|_n)$.  Define the operators $S_1:X\to (\oplus_{n=1}^\infty X_n)_{\ell_p}$, $S_2:(\oplus_{n=1}^\infty X_n)_{\ell_p}\to (\oplus_{n=1}^\infty Y_n)_{\ell_p}$, and $S_3:(\oplus_{n=1}^\infty Y_n)_{\ell_p}\to Y$ by $$S_1x=(\ee_n x)_{n=1}^\infty, \hspace{10mm} S_2((x_n)_{n=1}^\infty)= (\ee_n^{-2}(A_n-A_{n+1})x_n)_{n=1}^\infty,\hspace{10mm} S_3((y_n)_{n=1}^\infty)=\sum_{n=1}^\infty \ee_n y_n.$$  If $p=\infty$, we replace the $\ell_p$ direct sum with the $c_0$ direct sum.  Then $\|S_1\|\leqslant \sum_{n=1}^\infty \ee_n$, $\|S_3\|\leqslant 2\sum_{n=1}^\infty \ee_n$, and \begin{align*} T_p(S_2) & \leqslant \sup_{n\in \nn}\max\{T_p(A_n-A_{n+1}:X_n\to Y_n), \|A_n-A_{n+1}:X_n\to Y_n\|\} \\ &  \leqslant \max \{C_p, 2\}<2C_p.\end{align*}  Moreover, $A_1-A=S_3S_2S_1$ and $$T_p(A_1-A)\leqslant \|S_3\|T_p(A)\|S_1\| \leqslant 4C_p\bigl(\sum_{n=1}^\infty \ee_n\bigr)^2<\ee.$$ This contradiction finishes the proof.

\end{proof}

\begin{rem}\upshape For each $1<p\leqslant \infty$, the class $\mathfrak{T}_p$ fails to be a closed ideal, necessitating the introduction of the norm $\mathfrak{t}_p$.  Indeed, let $\theta_n=1/\log_2(n+1)$ for each $n\in \nn$. Define  $A:(\oplus_{n=1}^\infty C[0, \omega^n])_{c_0}\to (\oplus_{n=1}^\infty C[0, \omega^n])_{c_0}$  by $A|_{C[0, \omega^n]}=\theta_n I_{C[0, \omega^n]}$ and $A_k:(\oplus_{n=1}^\infty C[0, \omega^n])_{c_0}\to (\oplus_{n=1}^\infty C[0, \omega^n])_{c_0}$ is given by $A_k|_{C[0, \omega^n]}=\theta_n I_{C[0, \omega^n]}$ for those $n$ such that $n\leqslant k$ and $A_k|_{C[0, \omega^n]}\equiv 0$ if $n>k$. Then it is easy to verify that $A_n\to A$ in norm, each $A_n$ is asymptotically uniformly flattenable, while $A$ fails to have any non-trivial Szlenk power type. From this it follows that $A$ cannot satisfy $C$-$\ell_p$ upper tree estimates for any $1<p$ and $C>0$.

\end{rem}

\end{document}